\newtheorem{thm}{Theorem}[section]
\newtheorem{cor}[thm]{Corollary}
\newtheorem{lem}[thm]{Lemma}
\newtheorem{pro}[thm]{Proposition}
\theoremstyle{definition}
\newtheorem{rem}[thm]{Remark}
\numberwithin{equation}{section}
\renewcommand{\epsilon}{\varepsilon}
\newcommand{\legendre}[2]{\genfrac{(}{)}{}{}{#1}{#2}}
\newtheorem{theorem}{Theorem}[section]
\newcommand{\X}{\mathbb{X}}
\newcommand{\Y}{\mathbb{Y}}
\newcommand{\ex}{\mathbb{E}}
\newcommand{\pr}{\mathbb{P}}
\newcommand{\ep}{\varepsilon}
\newcommand{\B}{\mathcal{B}}
\newcommand{\un}{\mathbf{n}}
\newcommand{\ut}{\mathbf{t}}
\newcommand{\newabstract}[1]{%
  \par\bigskip
  \csname otherlanguage*\endcsname{#1}%
  \csname captions#1\endcsname
  \item[\hskip\labelsep\scshape\abstractname.]
}
\begin{document}

\baselineskip=17pt

\title{The limiting distribution of Legendre paths}

\author{Ayesha Hussain}

\address{Department of Mathematics,
University of Exeter,
North Park Road,
Exeter,
EX4 4QF,
U.K.}
\email{a.s.hussain@exeter.ac.uk}

\author{Youness Lamzouri}

\address{Universit\'e de Lorraine, CNRS, IECL,  and  Institut Universitaire de France,
F-54000 Nancy, France}

\email{youness.lamzouri@univ-lorraine.fr}


\begin{abstract} Let $p$ be a prime number and 
$\left(\frac{\cdot}{p}\right)$ be the Legendre symbol modulo $p$. The \emph{Legendre path} attached to $p$ is the polygonal path whose vertices are the normalized character sums $\frac{1}{\sqrt{p}} \sum_{n\leq j} \left(\frac{n}{p}\right)$ for $0\leq j\leq p-1$. In this paper, we investigate the distribution of Legendre paths as we vary over the primes $Q\leq p\leq 2Q$, when $Q$ is large. Our main result shows that as $Q \to \infty$, these paths converge in law, in the space of real-valued continuous functions on $[0, 1]$, to a certain random Fourier series constructed using Rademacher random completely  multiplicative functions. This was previously proved by the first author under the assumption of the Generalized Riemann Hypothesis. 

\end{abstract}

\subjclass[2010]{Primary 11L40 11N64; Secondary 11K65}

\thanks{}

\maketitle

\section{Introduction}\label{s: introduction_real_characters}
    
    A central question in number theory is to understand the behavior and the size of the character sum
     \begin{align}\label{eq: definition of character sum}
        S_p(x) := 
        \sum_{n \leq x} \legendre{n}{p},
    \end{align}
    where $\left(\frac{\cdot}{p}\right)$ is the Legendre symbol modulo an odd prime $p$ and $1\leq x\leq p$.
    Such sums encode important
information on the distribution of quadratic residues and non-residues modulo $p$. In particular, bounds for $S_p(x)$ lead to results on the size of the least quadratic non-residue
modulo $p$ (see for example the works of Ankeny \cite{An}; 
Burgess \cite{Bu}; Graham and Ringrose \cite{GrRi}; and Montgomery
\cite{Mo}). Moreover, a recent work of Granville and Mangerel \cite{GrMa} shows that cancellations in the sum $S_p(x)$ in short initial intervals are more or less equivalent to improved bounds for the value of the Dirichlet $L$-function $L\big(s, \big(\frac{\cdot}{p}\big)\big)$ at $s=1$, which is connected to class numbers of the quadratic extensions $\mathbb{Q}(\sqrt{\pm p})$.  

The Legendre symbol appears to exhibit random behavior over short intervals. This is for example illustrated by the work of Davenport and Erd\"{o}s \cite{DaEr}, who studied the distribution of $S_p(x)$ over short moving intervals. More precisely, they showed that if $p$ is large and $x$ varies among the integers $\{0, 1, \dots, p-1\}$, the short sum $S_p(x+H)-S_p(x)$ tends to a normal distribution with mean zero and variance $H$, provided $\log H = o(\log p)$ and $H \rightarrow \infty$ as $p \rightarrow \infty$. Recently, Harper \cite{Ha3} showed that this is no longer the case if $H$ is much larger, namely when $H\geq p/(\log p)^A$, and $A>0$ is a fixed constant. 

Our goal in this paper is to gain some understanding on the distribution of $S_p(x)$ over long intervals. In particular we will attempt to answer the following question: do these character sums, viewed as functions of $x$, possess a limiting distribution as $p$ varies? and if so, can we describe this distribution?

\noindent Asked this way, this question does not make sense since one has $\max_{1\leq x\leq p}|S_p(x)|\gg \sqrt{p}$ (this is an easy consequence of Parseval's Theorem applied to P\'olya's Fourier expansion \eqref{eq: definition Legendre path} below), and $\max_{1\leq x\leq p}|S_p(x)|\ll_{\ep} \sqrt{p}$ for all but at most $O(\ep \pi(Q))$ primes $p\leq Q$ (this was proved by Montgomery and Vaughan \cite{MoVa79}), where $\pi(Q)$ is the number of primes $p\leq Q$. Therefore, we need to normalize the sum $S_p(x)$ by $\sqrt{p}$.  
Moreover, since $S_p(x)$ is periodic with period $p$, we shall normalize the length of the sum by $p$, and consider the function $t\to \frac{1}{\sqrt{p}} S_p(tp)$.  However, a final complication remains: this function is discontinuous, with jumps at every $t \in \frac{1}{p}\mathbb{N}$. Instead, we shall consider the continuous function $f_p$ on $[0,1]$, where we concatenate the points where $\frac{1}{\sqrt{p}} S_p(tp)$ changes. We call such a function the  \textbf{Legendre path} associated to $p$. Note that for $t\in [0,1]$ we have
\begin{equation}\label{Eq: RelationFpSp}
f_p(t) = \frac{1}{\sqrt{p}}S_p(tp) + \frac{\{ pt \}}{\sqrt{p}} \legendre{\lceil pt \rceil }{p}= \frac{1}{\sqrt{p}}S_p(tp) +  O\left(\frac{1}{\sqrt{p}}\right),
\end{equation}  
where $\{x \}$ is the fractional part of $x$.

In this paper, we shall prove that these paths, viewed as random processes on $[0, 1]$, have a nice limiting distribution. Before stating our result, we shall construct a probabilistic random model for the Legendre path $f_p$.  
A good random model for the Legendre symbol, which was extensively studied in recent years, is a \emph{Rademacher random completely multiplicative function}. This is defined as
\begin{equation}\label{eq: RademacherRandom}\X_n:= \prod_{q^k|| n} \X_q^k
\end{equation}
for $n\in \mathbb{N}$, where $\X_1=1$, and $\{\X_q\}_{q \text{ prime}}$ is a sequence of I. I. D. random variables taking the values $\pm 1$ with probability $1/2$ each. The distribution of sums of Rademacher random multiplicative functions was extensively studied in recent years, in particular by Harper \cite{Ha1} and \cite{Ha2}.  
In studying the distribution of the character sum $S_p(x)$, our first guess is to model the normalized sum $\frac{1}{\sqrt{p}}S_p(tp)$  by the following normalized sum of random variables 
$$ \frac{1}{\sqrt{p}}\sum_{n\leq tp }\X_n.$$
However, it turns out that this is not a good model for $\frac{1}{\sqrt{p}}S_p(tp)$, since it does not account for the periodicity of the Legendre symbol modulo $p$. To exploit this periodicity, we shall use the Fourier series expansion of $f_p$, which was first established by P\'olya in the following quantitative form
    (eq. (9.19), p. 311 of \cite{MoVaBook})
    \begin{equation}\label{eq: definition Legendre path}
        f_p(t) = \frac{\tau(\legendre{\cdot}{p})}{2 \pi i \sqrt{p}} \sum_{1 \leq  |n| \leq  Z} \legendre{n}{p} \frac{1 - e(-nt)}{n} + O\left(\frac{1}{\sqrt{p}}+\frac{\sqrt{p}\log p}{Z}\right),
    \end{equation}
    for any real number $Z\geq 1$, where $e(t):= e^{2\pi i t}$ and $\tau(\legendre{\cdot}{p})$ is the Gauss sum associated to the Legendre symbol modulo $p$, more precisely
    \begin{equation}\label{eq: GaussSum}
    \begin{aligned}
        \tau\left(\legendre{\cdot}{p}\right) := \sum_{a = 1}^p \legendre{a}{p} e(a/p) = \begin{dcases}
             \sqrt{p}, & \text{ if } p \equiv 1 \pmod 4,  \\
             i \sqrt{p}, & \text{ if } p \equiv 3 \pmod 4.
        \end{dcases}
    \end{aligned}
    \end{equation}

    \begin{rem}\label{Rem: EvenOddLegendre}
    The value of the prime modulo $4$ influences the shape of the Legendre path, as shown in Figure \ref{fig: characterpaths}. In fact, a simple calculation implies that for all $j\in \{0, 1, \dots, p-1\}$ we have $S_p(p-1-j)=S_p(j)$ if $p\equiv 3\pmod 4$, and $S_p(p-1-j)=-S_p(j)$ if $p\equiv 1\pmod 4$. This shows that if we extend the Legendre path $f_p$ to $\mathbb{R}$ by periodicity, the periodic extension is an even function if $p\equiv 3\pmod 4$, and is an odd function if $p\equiv 1\pmod 4$.  

    \end{rem}
    
    \begin{figure} 
        \centering
        \begin{tabular}{cc}
            \includegraphics[width=6cm]{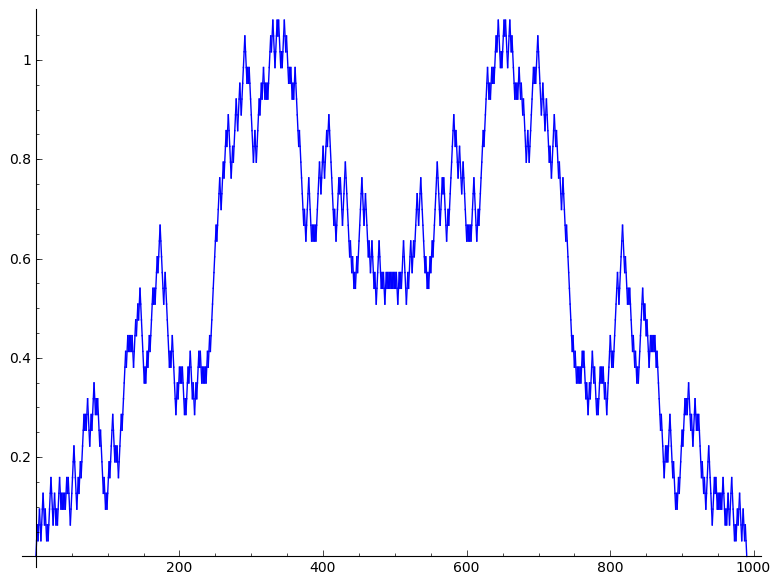} &\includegraphics[width=6cm]{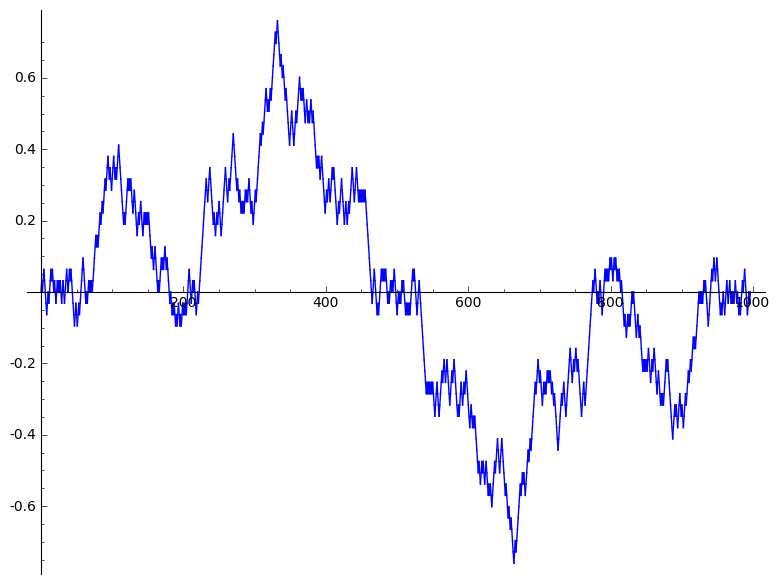} \\
            \small{The Legendre path for $p = 991 \equiv 3 \pmod 4$ } &
            \small{The Legendre path for $p = 997 \equiv 1 \pmod 4$.}
        \end{tabular}
        \caption{Legendre paths for $p = 991$ and $p = 997$, where the $x$-axis is $pt$.}
        \label{fig: characterpaths}
    \end{figure}
    Inserting the values of the Gauss sum \eqref{eq: GaussSum} in the Fourier expansion \eqref{eq: definition Legendre path} gives
    \begin{equation}\label{eq: PolyaFourier}
f_p(t) = \frac{\varepsilon_p}{2 \pi i } \sum_{1 \leq  |n| \leq  Z} \legendre{n}{p} \frac{1 - e(-nt)}{n} + O\left(\frac{1}{\sqrt{p}}+\frac{\sqrt{p}\log p}{Z}\right),
    \end{equation}
    where $\varepsilon_p= 1$ if $p\equiv 1\pmod 4$, and equals $i$ if $p\equiv 3\pmod 4$. Let $\{\X_q\}_{q \text{ prime}}$ be a sequence of I. I. D. random variables taking the values $\pm 1$ with equal probability $1/2$. Since the primes split evenly into the residue classes $1$ and $3 \pmod 4$, we shall model the value $\left(\frac{-1}{p}\right)$ by a random variable $\X_{-1}$, which is independent from the $\X_q$'s for $q$ prime, and takes the values $\pm1$ with equal probability $1/2$. We also extend the definition of Rademacher random completely multiplicative functions to the negative integers using multiplicativity, namely by setting $\X_{-n}=\X_{-1}\X_n$ for $n\in \mathbb{N}$. Combining all these elements together leads us to model the Legendre path $f_p$ by the following random Fourier series
$$ F_{\X}(t):= \frac{\Y}{2 \pi i} \sum_{n \in \mathbb{Z}\setminus\{0\}} \X_n \frac{1 - e(-nt)}{n} $$
for $t\in [0,1]$, where the random variable $\Y$ is defined by $\Y= 1$ if $\X_{-1}=1$, and $\Y=i$ if $\X_{-1}=-1$. For a fixed $t\in [0, 1]$, it follows from Lemma 1 of Kalmynin \cite{Ka} that  the series defining $F_{\X}(t)$ converges almost surely as the limit of the partial sums 
$$F_{\X, N}(t):=\frac{\Y}{2 \pi i} \sum_{0<|n|\leq N} \X_n \frac{1 - e(-nt)}{n}. $$
Note that $F_{\X, N}$ are $C([0,1])$-valued random variables. We will show, in Proposition \ref{pro: ContinuityRandom } below, that the sequence of processes $(F_{\X, N})_{N\geq 1}$ converges in $C([0, 1])$ to the process $F_{\X}$ as $N\to \infty$. As a consequence, we deduce that $F_{\X}$ is almost surely the Fourier series of a continuous function. Figure \ref{fig: random paths} shows samples of the random process $F_{\X}$ depending on the value of $\X_{-1}.$

Let $Q$ be a large positive integer. We shall view each $p \mapsto f_p$ for $Q\leq p\leq 2Q$ as a random variable on the finite probability space $\{Q\leq p\leq 2Q\}$ endowed with the uniform probability measure. We denote this random variable by $\mathcal{F}_Q$. Our main result shows that the stochastic process $(\mathcal{F}_Q)_{Q}$ converges in law, in the space of continuous functions $C([0, 1])$, to the random process $F_{\X}$.

    \begin{figure} 
        \centering
        \begin{tabular}{cc}
            \includegraphics[width=6cm]{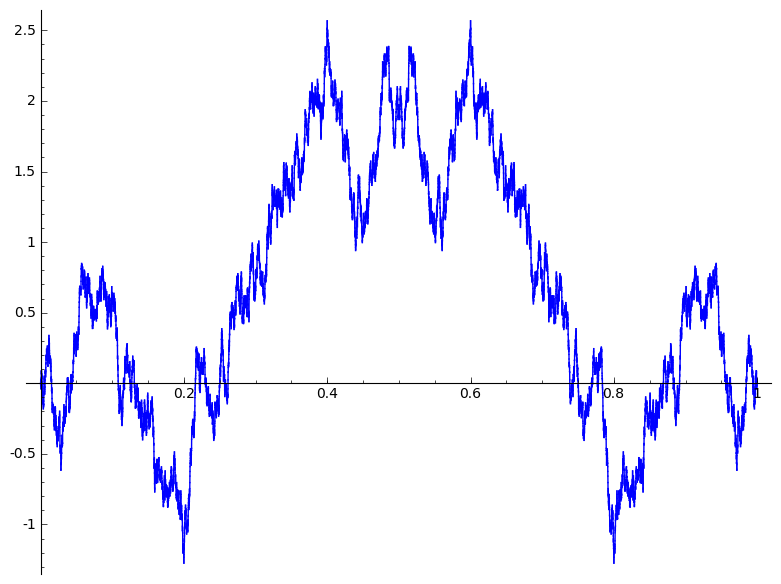} &\includegraphics[width=6cm]{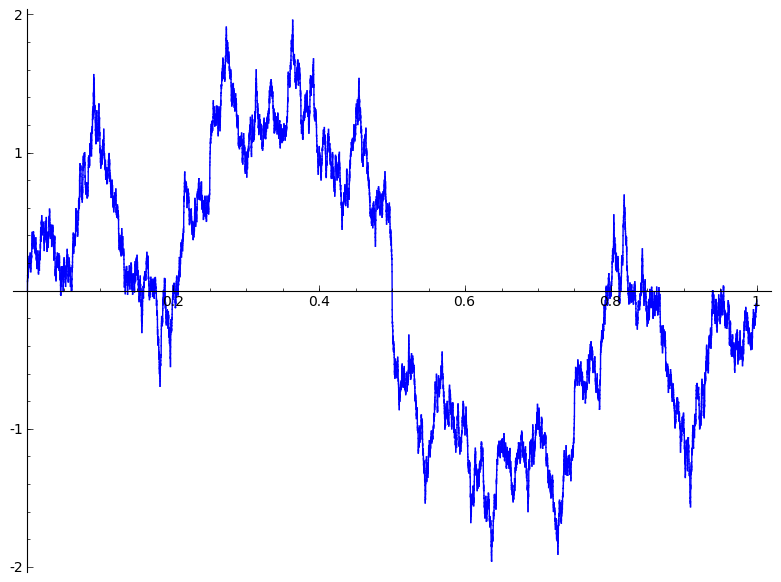} \\
            \small{Sample of $F_{\X}(t)$ with $\X_{-1} = -1$.} &
            \small{Sample of $F_{\X}(t)$ with $\X_{-1} = 1$}
        \end{tabular}
        \caption{Samples, with 10,000 points, of $F_{\X}(t)$, where the $x$ axis is $t$.}
        \label{fig: random paths}
    \end{figure}

    \begin{theorem}\label{thm: main theorem}
As $Q\to \infty$, the sequence of processes $(\mathcal{F}_Q)_{Q}$ converges to the process $F_{\X}$
  in the
sense of convergence in law in the Banach space $C([0, 1])$ endowed with the topology of uniform convergence. 
 More precisely, for any continuous and bounded map
        \begin{align*}
            \phi: C([0,1]) \rightarrow \mathbb{R},
        \end{align*}
        we have
        \begin{align*}
            \lim_{Q \rightarrow \infty} \mathbb{E}(\phi(\mathcal{F}_{Q})) = \mathbb{E}(\phi(F_{\X})),
        \end{align*}
        where 
        $$ \mathbb{E}(\phi(\mathcal{F}_{Q})):=\frac{1}{\pi^*(Q)}\sum_{Q\leq p\leq 2Q} \phi(f_p),$$
        and $\pi^*(Q)$ is the number of primes $p$ in the interval $[Q, 2Q]$.
    \end{theorem}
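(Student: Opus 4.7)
The plan is to follow the classical Prokhorov approach: to establish convergence in law in $C([0,1])$, I would prove (i) convergence of all finite-dimensional distributions of $\mathcal{F}_Q$ to those of $F_{\X}$, and (ii) tightness of the family $(\mathcal{F}_Q)_Q$ in $C([0,1])$.

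For (i), I would use the method of moments. Fix $t_1,\dots,t_m\in[0,1]$ and nonnegative integers $a_i,b_i$, and compute the mixed moment $\mathbb{E}\bigl[\prod_i \mathcal{F}_Q(t_i)^{a_i}\overline{\mathcal{F}_Q(t_i)}^{b_i}\bigr]$. Inserting the P\'olya expansion \eqref{eq: PolyaFourier} with a truncation parameter $Z=Z(Q)\to\infty$ slowly so that the total error after raising to the power $k=\sum(a_i+b_i)$ is $o(1)$, the moment becomes a finite sum over tuples $(n_1,\dots,n_k)$ of deterministic factors $\prod_j\frac{1-e(-n_jt_{i_j})}{n_j}$ against the prime average
$$\frac{1}{\pi^*(Q)}\sum_{Q\leq p\leq 2Q}\varepsilon_p^{\,c}\prod_{j=1}^k\legendre{n_j}{p},$$
where $c$ records the conjugation pattern. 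By complete multiplicativity this inner product equals $\legendre{N}{p}$ with $N=\prod_jn_j$, and quadratic reciprocity turns $p\mapsto\legendre{N}{p}\varepsilon_p^{\,c}$ into (a product of) Dirichlet characters mod $4|N|$. For $N$ in our allowable range, Siegel--Walfisz forces the average to tend to $0$ unless $N$ is a perfect square and $c$ is even, in which case it tends to the corresponding value $\mathbb{E}[\Y^c\prod_j\X_{n_j}]$ for the random model. This matches the moments of $F_{\X,N}$; passing $N,Z\to\infty$ using Proposition~\ref{pro: ContinuityRandom } converts this into convergence to the moments of $F_{\X}$, and the Carleman moment criterion then gives the finite-dimensional convergence in law.

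For (ii), I would establish a Kolmogorov--Chentsov-type bound
$$\mathbb{E}\bigl|\mathcal{F}_Q(t)-\mathcal{F}_Q(s)\bigr|^{2k}\leq C_k\,|t-s|^{1+\alpha}$$
for some fixed integer $k\geq 2$ and some $\alpha>0$, uniformly in $s,t\in[0,1]$ and large $Q$. Writing $f_p(t)-f_p(s)$ via \eqref{eq: PolyaFourier}, the leading term is $\frac{\varepsilon_p}{2\pi i}\sum_{0<|n|\leq Z}\legendre{n}{p}\frac{e(-ns)-e(-nt)}{n}$. Raising to the $2k$-th power and averaging over primes reduces, by the same prime-average argument as in (i), to a "diagonal" contribution bounded by $\sum_{n\geq 1}\min(1,(n|t-s|)^2)/n^{2}$ raised to the $k$-th power, which is $\ll (|t-s|\log(1/|t-s|))^k$; choosing $k$ large makes the effective exponent exceed $1$. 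The off-diagonal contribution is controlled by the same Siegel--Walfisz savings used for (i). Combined with the trivial bound $|f_p(t)-f_p(s)|\ll|t-s|\sqrt{p}+1/\sqrt{p}$ from the definition, this handles the regime $|t-s|\gtrsim 1/Q$ as well, yielding tightness.

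The main obstacle is the \emph{unconditional} handling of the prime averages of Legendre symbols when the product $N=\prod_jn_j$ is large. Under GRH (as in the first author's earlier work), one may take $Z$ as large as one pleases and bound such averages with square-root savings uniformly in $N$. Unconditionally, Siegel--Walfisz only affords equidistribution for $|N|\leq(\log Q)^A$, forcing $Z$ to grow very slowly. Balancing this restriction---small enough $Z$ for Siegel--Walfisz to bite, yet large enough to make the P\'olya truncation error negligible even after raising to the $2k$-th power---is the key technical issue, and will likely require a careful two-step argument: a P\'olya--Vinogradov (or large-sieve) bound to trim the high-frequency tail of the Fourier expansion down to a Siegel--Walfisz-compatible range, and then a soft matching of low-frequency moments with the random model as outlined above.
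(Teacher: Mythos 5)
Your overall architecture (finite-dimensional convergence by the method of moments, plus a Kolmogorov-type tightness bound) is exactly the paper's, and your part (i) is essentially viable: you correctly isolate the real difficulty, namely trimming the P\'olya expansion from length $\approx Q^{1/2}$ down to a range where prime equidistribution of $p\mapsto\legendre{N}{p}$ is available unconditionally. The paper does this with a stronger input than a bare large sieve (Lemma \ref{lem: Tail2}, imported from \cite{La}), truncates at $Y=\exp((\log Q)^{1/3})$ rather than $(\log Q)^A$, and must then treat the possible exceptional discriminant $|q_1|$ separately via the ineffective Siegel lower bound \eqref{LowerBoundSiegel}; your proposed two-step trimming (Montgomery--Vaughan's large sieve up to $Q^{1/2-\ep}$, Heath-Brown's beyond, exceptional primes contributing only $O((\log Q)^{n})$ times the density of the exceptional set since the truncated P\'olya sum is trivially $O(\log Q)$) could plausibly be made to work for the qualitative statement, though you have not carried it out, and a pure P\'olya--Vinogradov bound certainly cannot do the trimming.

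The genuine gap is in tightness. You propose a single bound $\mathbb{E}|\mathcal{F}_Q(t)-\mathcal{F}_Q(s)|^{2k}\le C_k|t-s|^{1+\alpha}$ uniformly in $s,t$, with the off-diagonal controlled ``by the same Siegel--Walfisz savings used for (i).'' This cannot work. The off-diagonal error coming from equidistribution of $\legendre{N}{p}$ over primes is, unconditionally, at best $\exp(-c\sqrt{\log Q})$ (and in practice only a power of $(\log Q)^{-1}$ after the trimming losses), and this error must be compared to the target $|t-s|^{1+\alpha}$. It therefore only beats the target when $|t-s|\gg\exp(-c'\sqrt{\log Q})$, while your trivial bound $|f_p(t)-f_p(s)|\le\sqrt{p}\,|t-s|$ only covers $|t-s|\ll Q^{-1/2-\ep}$ even after taking very high moments (Remark \ref{Rem.Trivial}). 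The entire intermediate range $Q^{-1/2}\lesssim|t-s|\lesssim\exp(-c'\sqrt{\log Q})$ is left uncovered, and it is precisely here that the paper must work hardest: it invokes a three-range variant of Kolmogorov's criterion, uses Burgess's bound \eqref{Eq.Burgess} with a $1000$-th moment for $Q^{-1}\le|t-s|\le Q^{-2/5}$, and for $Q^{-2/5}\le|t-s|\le 1$ replaces the equidistribution argument altogether by the quadratic large sieve (Heath-Brown for small $|t-s|$, Montgomery--Vaughan for $|t-s|\ge Q^{-1/6}$ where Heath-Brown's $Q^{\ep}$ loss is fatal), together with a bootstrapping step to first truncate the P\'olya sum at $Y$. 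The large sieve delivers square-root cancellation \emph{on average over the family} without any equidistribution input, which is the mechanism your sketch is missing; without it (or some substitute), the tightness estimate fails in the stated intermediate range and the proof of Theorem \ref{thm: main theorem} is incomplete.
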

    
    The proof of this result is split into two main parts. We first show in Section \ref{s: proof of convergence in finite distributions real} that the sequence of processes $(\mathcal{F}_Q)_Q$ converges to the random process $F_{\X}$ in the sense of finite distributions, as $Q\to \infty$. By Prokhorov's Theorem (see for example \cite[Theorem 5.1]{Bi}) convergence in law in $C([0,1])$ is then equivalent to showing that the sequence of processes $(\mathcal{F}_Q)_Q$ is \emph{tight}, which we establish in Section \ref{s: Tighness}.

    \begin{rem} The ``graphs'' of various exponential sums were investigated by several authors since the seventies. See for example the works of Lehmer \cite{Le} and Loxton \cite{Lo1} and \cite{Lo2}. 
    Our work is motivated by the recent papers of Kowalski and Sawin \cite{KoSa} on Kloosterman paths of prime moduli (paths constructed using partial sums of Kloosterman sums), and Ricotta-Royer \cite{RiRo}, Ricotta-Royer-Shparlinski \cite{RiRoSh}, and Milicevich-Zhang \cite{MiZh} on generalisations to Kloosterman paths of prime power moduli. However, unlike these works, where the corresponding equidistribution problem can be seen as ``vertical'', ours can be viewed as a ``horizental'' problem, which is usually believed to be more challenging. 

   In \cite{Hu} the first author investigated the distribution of character paths (paths constructed using partial sums of non-principal Dirichlet characters modulo a prime $q$), and proved the analogue of Theorem \ref{thm: main theorem} for these paths. In this case, the sequence of corresponding processes  converges in law to an analogous random Fourier series as $q\to \infty$, where the Fourier coefficients are Steinhaus random multiplicative functions, and where the random variable $\Y$ is uniformly distributed on the unit circle. We should note that since the Legendre symbol is a real character, the Legendre path is a time graph, while the Kloosterman paths and character paths (for non-real Dirichlet characters) are instead polygonal curves in the complex plane.  
      \end{rem}   
        
        In her thesis, the first author proved Theorem \ref{thm: main theorem} assuming the Generalized Riemann Hypothesis GRH. Our unconditional proof is completely different and uses several new ideas to remove GRH 
 from the different parts of the proof. More precisely, to prove that the sequence of processes $(\mathcal{F}_Q)_Q$ converges in law to the random process $F_{\X}$ in the sense of finite distributions, we compute in Section 3 the multivariate moments of $(f_{p}(t_1), \dots, f_p(t_k))$ (for $0\le t_1<\dots <t_k\leq 1$) unconditionally, by using a recent result of the second author \cite{La} to considerably shorten the P\'olya Fourier expansion \eqref{eq: definition Legendre path} of $f_p$ for almost all $p$. We also need to bound the contribution of the possible exceptional discriminant.
 At the end of Section 3, we describe an alternative approach to establishing convergence in the sense of finite distributions, which is discussed in Section 6.2 of  Kowalski's book \cite{Ko}, and relies instead on showing that the sequence of ``Fourier coefficients'' of the Legendre path converges in the sense of finite distributions to those of $F_{\X}$. Although this approach is somewhat simpler, we decided to include both since the original argument gives a quantitative asymptotic formula for the joint moments of the Legendre path $f_p$ at different points, which is interesting on its own, and might have further applications. Furthermore, in Section 4 we prove tightness of the sequence of processes $(\mathcal{F}_Q)_Q$ by appealing to a variant of Kolmogorov's Tightness Criterion from \cite{Ko}, and then using Burgess's bound for short character sums along with the quadratic large sieve inequalities of Heath-Brown \cite{HB} and Montgomery and Vaughan \cite{MoVaBook}, in order to bound several moments of $|f_p(t)-f_p(s)|$ depending on the range of $|t-s| \in [0,1]$. We should note that our proof is very different from the work of the first author \cite{Hu} on character paths associated to the family of non-principal characters modulo a large prime $q$, since this family behave very differently from that of Legendre symbols. 
 Indeed, the crucial ingredients used in \cite{Hu} 
 are the orthogonality relations for characters modulo $q$, as well as Deligne's bound for hyper-Kloosterman sums, which is used by the first author to control certain twisted moments of Gauss sums.

    \begin{rem} Recall that the behavior of the Legendre path attached to $p$ depends on the value of the Legendre symbol $\left(\frac{-1}{p}\right)$. Indeed, it follows from \eqref{eq: PolyaFourier} that for any real number $Z\geq 1$ we have 
    \begin{equation} \label{eq: Polya1mod4}f_p(t) = \frac{1}{\pi } \sum_{1 \leq  |n| \leq  Z} \legendre{n}{p} \frac{\sin(2\pi n t)}{n} + O\left(\frac{1}{\sqrt{p}}+\frac{\sqrt{p}\log p}{Z}\right),
    \end{equation}
    if $p\equiv 1 \pmod 4$, and 
  \begin{equation} \label{eq: Polya3mod4}f_p(t) = \frac{1}{\pi } \sum_{1 \leq  |n| \leq  Z} \legendre{n}{p} \frac{1-\cos(2\pi n t)}{n} + O\left(\frac{1}{\sqrt{p}}+\frac{\sqrt{p}\log p}{Z}\right),
    \end{equation}
    if $p\equiv 3\pmod 4$. As before, we can view each $p \mapsto f_p$ for $Q\leq p\leq 2Q$ and $p\equiv 1\pmod 4$ (respectively $p\equiv 3\pmod 4$) as a random variable on the finite probability space $\{Q\leq p\leq 2Q \text{ and } p\equiv 1\pmod 4\}$ (respectively $\{Q\leq p\leq 2Q \text{ and } p\equiv 3\pmod 4\}$) endowed with the uniform probability measure, and we denote this random variable by  $\mathcal{F}_{Q, +}$ (respectively $\mathcal{F}_{Q, -}$).  We also 
do the same with the random Fourier series, where we fix $\X_{-1} = 1$ or $-1$. To this end we define for $t\in [0, 1]$
        \begin{align*}
            &F_{\X,+}(t) = \frac{1}{\pi} \sum_{n \geq 1} \X_n \frac{\sin(2\pi nt)}{n}, &\quad &F_{\X,-}(t) = \frac{1}{\pi} \sum_{n \geq 1} \X_n \frac{1 - \cos(2 \pi n t)}{n}.
        \end{align*}
       Then, it follows from the proof of Theorem \ref{thm: main theorem} that the sequence of processes $(\mathcal{F}_{Q, \pm})_{Q}$ converges to the process $F_{\X, \pm}$.
        \end{rem} 

    Recently, the second author \cite{La} investigated the distribution of $\frac{1}{\sqrt{p}}\max_{1\leq x\leq p} |S_p(x)|$, as $p$ varies over the primes $Q\leq p\leq 2Q$. In particular, he proved that the tail of the distribution is double exponentially decreasing in a large uniform range, which is believed to be best possible.  Since the maximum of the Legendre path $f_p$ occurs at its vertices and $||\cdot||_{\infty}$ is a continuous function on $C([0,1])$, a direct consequence of Theorem \ref{thm: main theorem} is the existence of a limiting distribution of $\frac{1}{\sqrt{p}}\max_{1\leq x\leq p} |S_p(x)|$.

    \begin{cor}\label{Cor: Distribution Maximum} Let $\mu$ be the probability measure on $[0, \infty)$ associated to the random variable $||F_{\X}||_{\infty}$. Then for any bounded
continuous function $h$ on $[0, \infty)$, we have 
$$ \lim_{Q \to \infty} \frac{1}{\pi^*(Q)}\sum_{Q\leq p\leq 2Q} h\left(\frac{1}{\sqrt{p}}\max_{1\leq x\leq p} |S_p(x)|\right)= \int_0^{\infty} h(x) d\mu (x).$$
    \end{cor}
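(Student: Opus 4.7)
The plan is to derive the corollary as a direct consequence of Theorem \ref{thm: main theorem} via the continuous mapping theorem. The key point is that the supremum-norm functional $\Psi: C([0,1])\to [0,\infty)$, $\Psi(g) = \|g\|_{\infty}$, is continuous with respect to the topology of uniform convergence (this follows at once from the reverse triangle inequality $|\|g\|_\infty - \|h\|_\infty|\leq \|g-h\|_\infty$). Consequently, for any bounded continuous $h:[0,\infty)\to \mathbb{R}$, the composition $\phi := h\circ \Psi$ is a bounded continuous map from $C([0,1])$ to $\mathbb{R}$, and so is an admissible test function in Theorem \ref{thm: main theorem}.

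The only nontrivial verification I would carry out is the identification
\begin{equation*}
\|f_p\|_{\infty} = \frac{1}{\sqrt{p}}\max_{1\leq x\leq p}|S_p(x)|.
\end{equation*}
By construction, $f_p$ is piecewise linear on $[0,1]$ with vertices $\bigl(j/p,\; \tfrac{1}{\sqrt{p}}S_p(j)\bigr)$ for $j=0,1,\dots, p-1$, so its supremum is attained at one of these vertices, giving $\|f_p\|_{\infty}=\frac{1}{\sqrt{p}}\max_{0\leq j\leq p-1}|S_p(j)|$. Since $S_p(x)$ is a step function constant on each interval $[j,j+1)$ and $S_p(0)=0$, this equals $\frac{1}{\sqrt{p}}\max_{1\leq x\leq p}|S_p(x)|$, as claimed.

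Combining the two ingredients, Theorem \ref{thm: main theorem} applied to $\phi = h\circ \Psi$ yields
\begin{equation*}
\lim_{Q\to \infty} \frac{1}{\pi^*(Q)}\sum_{Q\leq p\leq 2Q} h\!\left(\frac{1}{\sqrt{p}}\max_{1\leq x\leq p}|S_p(x)|\right) = \mathbb{E}\bigl[h(\|F_{\X}\|_{\infty})\bigr] = \int_0^{\infty} h(x)\, d\mu(x),
\end{equation*}
which is exactly the stated conclusion. There is no substantive obstacle in this argument: all the difficulty has already been absorbed into Theorem \ref{thm: main theorem}, and the corollary is a textbook application of convergence in law on $C([0,1])$ together with the continuity of the sup-norm.
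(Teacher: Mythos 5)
Your proposal is correct and is exactly the paper's argument: the authors justify the corollary in one line by noting that the maximum of the Legendre path is attained at its vertices (which are the values $\tfrac{1}{\sqrt{p}}S_p(j)$) and that $\|\cdot\|_{\infty}$ is continuous on $C([0,1])$, so the claim follows from Theorem \ref{thm: main theorem} applied to $\phi=h\circ\|\cdot\|_{\infty}$. Your write-up simply spells out these same two ingredients in more detail.
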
   
    This result is the analogue of Theorem 1.4 of \cite{BGGK}, which was proved (using a different and more direct approach) by Bober, Goldmakher, Granville and Koukoulopoulos for the family of non-principal characters modulo a large prime $q$.
    \subsection*{Acknowledgments} The authors would like to thank the anonymous referees for carefully reading the paper, and for several helpful suggestions, which led in particular to the simplification of the proof of Proposition \ref{ThirdRangeTSProposition}, as well as to the new Section 3.2 on the convergence of the sequence of Fourier coefficients of the path in  the sense of finite distributions. The second author is supported by a junior chair of the Institut Universitaire de France. Part of this work was completed while the second author was on a D\'el\'egation CNRS  at the IRL3457 CRM-CNRS in Montr\'eal. The second author would like to thank the CNRS for its support and the Centre de Recherches Math\'ematiques for its excellent working conditions. 
   
\section{Properties of the random Fourier series $F_{\X}(t)$}\label{s: Random Fourier}

\subsection{Almost sure continuity of $F_{\X}$}

Recall that $F_{\X}(t)$ is defined as the limit of the finite sum $F_{\X, N}(t)$ as $N\to \infty$, and that for fixed $t\in [0, 1]$,  the series defining $F_{\X}(t)$ is almost surely convergent by Lemma 1 of \cite{Ka}. In order to prove that $F_{\X}$ is almost surely continuous, we use the fact that $F_{\X, N}$ are $C([0,1])$-valued random variables, and then prove that the sequence of processes $(F_{\X, N})_{N}$ converges in law to the process $F_{\X}$. To this end we will show that  $(F_{\X, N})_{N}$ converges to $F_{\X}$ in the sense of finite distributions, and then show that the sequence $(F_{\X, N})_{N}$ is tight. 

\begin{pro}\label{pro: ContinuityRandom }
As $N\to \infty$, the sequence of processes $(F_{\X, N})_{N}$ converges in the Banach space $C([0, 1])$. In particular, its limit $F_{\X}$ is almost surely the Fourier series of a continuous function. 
\end{pro}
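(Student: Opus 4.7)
The approach, signalled in the remarks just before the statement, is to apply Prokhorov's scheme: establish (i) convergence of the finite-dimensional marginals of $(F_{\X, N})_N$ to those of $F_{\X}$, and (ii) tightness of the sequence of laws in $C([0,1])$, then combine the two to get convergence in law in $C([0,1])$. Once this is known, the almost sure continuity of $F_{\X}$ is automatic, since $C([0,1])$ is a closed subspace of the space of paths and the limit law must be supported there.

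Step (i) is quick. Fix $0 \leq t_1 < \cdots < t_k \leq 1$. By Lemma 1 of Kalmynin, for each $j$ the partial sum $F_{\X, N}(t_j)$ converges almost surely to $F_{\X}(t_j)$. Intersecting the $k$ almost sure convergence events gives almost sure, and hence distributional, convergence of the vector $\bigl(F_{\X, N}(t_j)\bigr)_{1 \leq j \leq k}$ to $\bigl(F_{\X}(t_j)\bigr)_{1 \leq j \leq k}$.

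Step (ii) is where the real work lies. I would apply Kolmogorov's tightness criterion, which reduces the task to producing constants $C, q, \alpha > 0$ such that
$$\mathbb{E} \bigl|F_{\X, N}(t) - F_{\X, N}(s)\bigr|^{2q} \leq C |t-s|^{1+\alpha}$$
uniformly in $N \geq 1$. The natural starting point is the second moment. Parameterize the nonzero integers as $n = \pm d k^2$ with $d$ squarefree, observe that $\X_n = \X_{\mathrm{sgn}(n)} \X_d$, and exploit the independence of $\X_{-1}$ from $\{\X_q\}_q$ together with the orthogonality $\mathbb{E}[\X_d \X_{d'}] = \mathbf{1}\{d = d'\}$ valid for squarefree $d, d'$. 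After averaging over the sign $\X_{-1}$, this rewrites
$$\mathbb{E} \bigl|F_{\X, N}(t) - F_{\X, N}(s)\bigr|^2 \asymp \sum_{\substack{d \leq N \\ d \text{ squarefree}}} \Bigl| \sum_{k \,:\, dk^2 \leq N} \frac{e(-dk^2 s) - e(-dk^2 t)}{dk^2} \Bigr|^2.$$
Splitting each inner sum at the threshold $dk^2 |t-s| \sim 1$ bounds it by $O(\sqrt{|t-s|/d})$, and summing over $d \leq N$ produces $\mathbb{E}|F_{\X, N}(t) - F_{\X, N}(s)|^2 \ll |t-s| \log(2 + 1/|t-s|)$, uniformly in $N$.

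The main obstacle is to upgrade this second-moment bound to the higher-moment bound required by Kolmogorov, since the $\X_n$ are not independent across $n$ and Khintchine's inequality cannot be applied directly. My plan is to decompose by the number of prime factors: write $F_{\X, N}(t) - F_{\X, N}(s) = \sum_{j \geq 0} T_j$, where $T_j$ collects the terms with $\omega(d) = j$. Each $T_j$ is a homogeneous Rademacher chaos of degree at most $j+1$ in the independent family $\{\X_q\}_q \cup \{\X_{-1}\}$, so Bonami's hypercontractive inequality gives $\|T_j\|_{2q} \leq (2q-1)^{(j+1)/2} \|T_j\|_2$. Combining this with Minkowski's inequality and the Hardy--Ramanujan estimate for the count of squarefree $d \leq N$ with $\omega(d) = j$ yields a $2q$-th moment bound of the form $C_q |t-s|^q (\log(2/|t-s|))^{O(1)}$; choosing $q$ large enough absorbs the logarithmic factor into an exponent $1 + \alpha$ on $|t-s|$. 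Kolmogorov's criterion then produces tightness, and Prokhorov's theorem completes the proof.
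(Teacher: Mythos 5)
Your argument is correct and follows the same overall scheme as the paper (convergence of finite-dimensional distributions plus Kolmogorov's tightness criterion), but the key tightness estimate is reached by a genuinely different route. The paper works directly with the fourth moment: writing $\big(\sum_n \X_n c_n\big)^2=\sum_n \X_n g(n)/n$ with $g$ the multiplicative convolution of the coefficients, it reduces $\ex\big(|F_{\X,N}(t)-F_{\X,N}(s)|^4\big)$ to a sum over pairs with $n_1n_2=\square$, bounded by elementary divisor estimates to give $\ll |t-s|^{3/2}$; this uses only the identity $\ex(\X_m)=\mathbf{1}_{m=\square}$ and no probabilistic inequality. You instead prove the second-moment bound $\ll |t-s|\log(2+1/|t-s|)$ (note that to make this uniform in $N$ you must also invoke the trivial bound $O(1/d)$ for the inner sum, so that the range $d>1/|t-s|$ contributes $O(|t-s|)$ rather than summing $|t-s|/d$ up to $N$), and then upgrade it via the chaos decomposition in $\omega(d)$ together with Bonami's hypercontractive inequality; the growth $(2q-1)^{(j+1)/2}$ is indeed dominated by the $1/\sqrt{j!}$ coming from the Hardy--Ramanujan count, so the scheme closes, and already $q=2$ suffices since $|t-s|^2(\log(2/|t-s|))^{O(1)}\ll |t-s|^{3/2}$. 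Your method is heavier but more robust (it yields all moments and does not rely on the fourth moment being computable in closed form), whereas the paper's computation is shorter and entirely elementary. Your handling of the finite-dimensional convergence, via almost sure convergence of each coordinate from Kalmynin's lemma, is a valid and slightly more direct substitute for the paper's $L^2$ argument combined with Lemma B.11.3 of Kowalski.
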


\begin{proof}
Let $t\in [0, 1]$. Then, we have 
\begin{align*}\ex\left(|F_{\X}(t)- F_{\X, N}(t)|^2\right)
&= \frac{1}{(2\pi)^2}\ex\Big(\Big|\sum_{|n|>N} \X_n \frac{1-e(-nt)}{n}\Big|^2\Big)\\
& = \frac{1}{(2\pi)^2}\sum_{|n_1|, |n_2|>N} \frac{(1-e(-n_1t))(1-e(n_2t))}{n_1n_2}\ex\left(\X_{n_1n_2}\right).
\end{align*}
Since $\ex(\X_n)=1$ if $n$ is a square, and equals $0$ otherwise, we deduce that 
$$ \ex\left(|F_{\X}(t)- F_{\X, N}(t)|^2\right) \ll \sum_{\substack{|n_1|, |n_2|>N \\ n_1n_2=\square}} \frac{1}{|n_1n_2|}\ll \sum_{n> N} \frac{d(n^2)}{n^2} \ll \sum_{n> N} \frac{1}{n^{3/2}} \ll \frac{1}{N^{1/2}},$$
upon using the bound $d(m)\ll m^{1/4}$, where $d(m)$ is the divisor function. Therefore, $F_{\X, N}$ converges to $F_{\X}$ in $L^{2}([0, 1])$ and hence in $L^1([0, 1])$ as $N\to \infty$. By Lemma B.11.3 of \cite{Ko} we deduce that the sequence of processes $(F_{\X, N})_{N}$ converges to the process $F_{\X}$ in the sense of finite distributions. 

To complete the proof, we need to show that the sequence of processes $(F_{\X, N})_{N\geq 1}$ is tight. By Kolmogorov's tightness criterion (see for example Proposition B.11.10 of \cite{Ko}), we need to prove the existence of constants $C\geq 0$, $\alpha>0$ and $\delta>0$ such that for any $N\geq 1$, and real numbers $0\leq s<t\leq 1$ we have 
$$\ex\left(\left|F_{\X, N}(t)-F_{\X, N}(s)\right|^{\alpha}\right)\leq C |t-s|^{1+\delta}.$$
We will prove this with $\alpha=4$ and $\delta= 1/2.$ Let $0\leq s<t\leq 1$ be real numbers.  For an integer $n\neq 0$ we define 
\begin{equation}\label{Definitiong}
g(n):= \sum_{\substack{n_1, n_2 \in \mathbb{Z}\\ n_1n_2=n}} \big(e(-n_1s)-e(-n_1t)\big)\big(e(-n_2s)-e(-n_2t)\big).
\end{equation}
Then we have 
\begin{equation}\label{eq: Bound4thMomentPartialRandom}
\begin{aligned}
\ex\left(\left|F_{\X, N}(t)-F_{\X, N}(s)\right|^{4}\right)&\ll \ex\Big(\Big|\sum_{1\leq |n|\leq N} \X_n \frac{e(-ns)-e(-nt)}{n}\Big|^4\Big)\\
& = \ex\Big(\Big|\sum_{1\leq |n|\leq N^2}  \frac{g(n) \X_n }{n}\Big|^2\Big) = \sum_{\substack{1\leq |n_1|, |n_2|\leq N^2\\ n_1n_2=\square}} \frac{g(n_1)\overline{g(n_2)}}{n_1n_2}. 
\end{aligned}
\end{equation}
Let $\ep>0$ be a small real number. Note that $|e(\alpha_1 n)-e(\alpha_2 n)|\ll \min(1, |n(\alpha_1-\alpha_2)|)$, and hence
\begin{equation}\label{EstimateGn}
\begin{aligned}    
g(n)&\ll \sum_{\substack{n_1, n_2 \in \mathbb{Z}\\ n_1n_2=n}} \min(1, |n_1(t-s)|)\min(1, |n_2(t-s)|)\ll d(|n|)\min(1, |n|(t-s)^2)\\
&\ll_{\ep} |n|^{\ep/4}\min(1, |n|(t-s)^2), 
\end{aligned}
\end{equation}
since $d(|n|)\ll_{\ep} |n|^{\ep/4}.$
This implies
$$
\sum_{\substack{1\leq |n_1|, |n_2|\leq N^2\\ n_1n_2=\square}} \left|\frac{g(n_1)g(n_2)}{n_1n_2}\right|
 \ll_{\ep} \sum_{\substack{1\leq |n_1|, |n_2|\leq N^2\\ n_1n_2=\square}} \frac{\min(1, |n_1n_2|(t-s)^4)}{|n_1n_2|^{1-\ep/4}}\ll_{\ep} \sum_{n=1}^{\infty}  d(n^2)\frac{\min(1, n^2(t-s)^4)}{n^{2-\ep/2}}. 
$$
We now use the bound $d(n^2)\leq d(n)^2\ll_{\ep} n^{\ep/2}$ to get 
\begin{equation}\label{BoundsGSum}
\sum_{\substack{1\leq |n_1|, |n_2|\leq N^2\\ n_1n_2=\square}} \left|\frac{g(n_1)g(n_2)}{n_1n_2}\right|
\ll_{\ep} 
\sum_{n \geq \frac{1}{|t-s|^2}} \frac{1}{n^{2-\ep}} + |t-s|^4\sum_{n \leq \frac{1}{|t-s|^2}} n^{\ep}
\ll_{\ep} |t-s|^{2-2\ep}.
\end{equation}
Inserting this estimate in \eqref{eq: Bound4thMomentPartialRandom} and choosing $\ep=1/4$ implies that 
$$\ex\left(\left|F_{\X, N}(t)-F_{\X, N}(s)\right|^{4}\right)\leq C |t-s|^{3/2},$$
for all positive integers $N$ and real numbers $0\leq s<t\leq 1$, where $C>0$ is an absolute constant. This completes the proof.  
\end{proof}

\subsection{The sequence of joint moments of the random process is determinate.}\label{s: the limiting moment}
In Section \ref{s: proof of convergence in finite distributions real} we shall use the method of moments to prove that the sequence of processes $(\mathcal{F}_{Q})_Q$ converges to the process $F_{\X}$ in the sense of convergence of finite distributions, as $Q\to \infty$.  More precisely, given an integer $k \geq 1$ and a $k-$tuple $0 \leq t_1 < \cdots < t_k \leq 1$, we will prove that the joint moments of $(\mathcal{F}_{Q}(t_1), \dots, \mathcal{F}_{Q}(t_k)) $  
    converge to those of $(F_{\X}(t_1), \dots, F_{\X}(t_k))$ as $Q\to \infty$. However, we first need to verify that these moments are determinate, i.e. that they have only one representing measure. For $\mathbf{n}=(n_1, \dots, n_k)\in (\mathbb{Z}_{\geq 0})^k$ we let $n= \sum_{j=1}^k n_j$, and define 
    \begin{equation}\label{eq: limiting moment}
            M_{\X}(\mathbf{n}):= \mathbb{E}\left( \prod_{i = 1}^k F_{\X}(t_i)^{n_i} \right), \quad 
            M_{\X,\pm}(\mathbf{n}):= \mathbb{E}\left( \prod_{i = 1}^k F_{\X,\pm}(t_i)^{n_i} \right).
        \end{equation}
Then we have 
\begin{equation}\label{eq: MomentRandomPM}
\begin{aligned}
 M_{\X}(\mathbf{n})&= \mathbb{E}\Bigg( \prod_{i = 1}^k F_{\X}(t_i)^{n_i} \ \Big|  \ \X_{-1}=1\Bigg) \pr(\X_{-1}=1) \\ 
 & \quad \quad  \quad +\mathbb{E}\left( \prod_{i = 1}^k F_{\X}(t_i)^{n_i} \ \Big| \ \X_{-1}=-1\right) \pr(\X_{-1}=-1)\\
& =\frac{1}{2}\left(M_{\X,+}(\mathbf{n})+ M_{\X,-}(\mathbf{n})\right), 
\end{aligned}
\end{equation}
since $\pr(\X_{-1}=1)= \pr(\X_{-1}=-1)=1/2.$
Therefore, using the definition of $F_{\X,+}(t)$ and expanding the moment $M_{\X,+}(\mathbf{n})$ we obtain
\begin{equation}\label{eq: FormulaMomentRandom+}
    \begin{aligned}
        M_{\X,+}(\mathbf{n}) &= \frac{1}{\pi^{n}} \mathbb{E} \left( \sum_{a_{1,1}, \dots, a_{k,n_k} \geq 1} \prod_{i = 1}^k \prod_{j = 1}^{n_i} \X_{a_{i,j}} \frac{\sin(2 \pi a_{i,j}t_i)}{a_{i,j}}\right)\\ & = \frac{1}{\pi^{n}} \mathbb{E} \left(\sum_{a=1}^{\infty} \frac{\X_a}{a}\mathcal{B}_{\mathbf{n},\mathbf{t}, +}(a)\right)= \frac{1}{\pi^{n}} \sum_{a=1}^{\infty} \frac{\mathcal{B}_{\mathbf{n},\mathbf{t}, +}(a^2)}{a^2},
    \end{aligned}
    \end{equation}
     where
    \begin{align*}
        \mathcal{B}_{\mathbf{n},\mathbf{t},+}(a) 
        := \sum_{a_{1}\cdots a_{k} = a} \prod_{i = 1}^k \sum_{b_{i,1} \cdots b_{i,n_i} = a_i } \prod_{j = 1}^{n_i} \sin(2 \pi b_{i,j} t_i).
    \end{align*}
    We observe that uniformly for $\mathbf{t}$ we have 
    \begin{equation}\label{eq: BoundBnt+}
|\mathcal{B}_{\mathbf{n},\mathbf{t}, +}(a)| \leq \sum_{a_1 \cdots a_k = a} \prod_{i = 1}^k d_{n_i}(a_i) = d_n (a),
\end{equation}
    where $d_n$ is the $n$-th divisor function.
Similarly, one has 
\begin{equation}\label{eq: FormulaMomentRandom-}
M_{\X,-}(\mathbf{n})= \frac{1}{\pi^{n}} \sum_{a=1}^{\infty} \frac{\mathcal{B}_{\mathbf{n},\mathbf{t}, -}(a^2)}{a^2},
\end{equation}    
where \begin{align*}
\mathcal{B}_{\mathbf{n},\mathbf{t},-}(a) 
:= \sum_{a_{1}\cdots a_{k} = a} \prod_{i = 1}^k \sum_{b_{i,1}\cdots b_{i,n_i} = a_i } \prod_{j = 1}^{n_i} (1-\cos(2 \pi b_{i,j} t_i)).
\end{align*}
We also have the following analogous bound to \eqref{eq: BoundBnt+}
\begin{equation}\label{eq: BoundBnt-}|\mathcal{B}_{\mathbf{n},\mathbf{t}, -}(a)|\leq 2^nd_n(a).
\end{equation}
    
   We now prove the following lemma.
    \begin{lem}
        The sequence of moments $(M_{\X}(\mathbf{n}))_{\mathbf{n}\in (\mathbb{Z}_{\geq 0})^k}$ have only one representing measure. 
    \end{lem}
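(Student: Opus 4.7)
I will establish that each marginal random variable $F_{\X}(t_j)$ has a moment generating function finite on the whole real line. By H\"{o}lder's inequality this forces the joint MGF $\mathbb{E}\bigl[\exp\bigl(\sum_{j=1}^k \lambda_j F_{\X}(t_j)\bigr)\bigr]$ to be finite in a neighborhood of $0\in\mathbb{R}^k$, which is a classical sufficient condition for the joint moment problem to be determinate. Hence the task reduces to bounding the absolute moments of each marginal.

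By Cauchy--Schwarz, $\mathbb{E}[|F_{\X}(t_j)|^m]\leq \sqrt{\mathbb{E}[F_{\X}(t_j)^{2m}]}$, so it is enough to bound the even moments $\mathbb{E}[F_{\X}(t_j)^{2m}]$. Combining \eqref{eq: MomentRandomPM} with the explicit formulas \eqref{eq: FormulaMomentRandom+}, \eqref{eq: FormulaMomentRandom-} and the divisor bounds \eqref{eq: BoundBnt+}, \eqref{eq: BoundBnt-} reduces this to controlling the divisor sum
$$ D_m:=\sum_{a\geq 1}\frac{d_m(a^2)}{a^2}. $$
Since $d_m$ is multiplicative, the generating-function identity $\sum_{j\geq 0}\binom{2j+m-1}{m-1}x^{2j}=\tfrac{1}{2}\bigl[(1-x)^{-m}+(1+x)^{-m}\bigr]$ yields the Euler product
$$ D_m=\prod_p \tfrac{1}{2}\bigl[(1-1/p)^{-m}+(1+1/p)^{-m}\bigr]. $$
I split the product at $p=m$: for $p\leq m$ each Euler factor is at most $(1-1/p)^{-m}$, so Mertens' theorem gives $\prod_{p\leq m}(1-1/p)^{-m}\ll (C_1\log m)^m$; for $p>m$ a Taylor expansion shows that the factor equals $1+O(m^2/p^2)$, and summing its logarithm over primes $p>m$ contributes only $O(m/\log m)$. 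Altogether $D_m\leq (C\log m)^m$ for an absolute constant $C$.

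Plugging this bound back yields $\mathbb{E}[|F_{\X}(t_j)|^m]\leq (C'\log m)^m$ for every $m\geq 2$, and Stirling's approximation then shows that $\sum_{m\geq 1}\lambda^m (C'\log m)^m/m!<\infty$ for every $\lambda>0$. Hence each marginal MGF is finite on all of $\mathbb{R}$, and the lemma follows. The main obstacle is the Euler product bound for $D_m$: the crude estimate $d_m(a)\ll m^{\Omega(a)}$ produces a divergent product over primes, so one genuinely needs both Mertens' theorem at the small primes and the cancellation between $(1-1/p)^{-m}$ and $(1+1/p)^{-m}$ at the large primes.
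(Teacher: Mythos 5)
Your proof is correct, but it takes a genuinely different route from the paper's. The paper verifies the (marginal) Carleman condition $\sum_{n}\bigl|\ex\bigl(F_{\X}(t_i)^{2n}\bigr)\bigr|^{-1/(2n)}=\infty$, citing Theorem 15.11 of Schm\"udgen for determinacy, and disposes of the divisor sum by the submultiplicativity step $d_{2n}(a^2)\le d_{2n}(a)^2$ followed by a citation of Lemma 3.3 of \cite{La2}, which gives $\sum_a d_{2n}(a)^2/a^2=\exp(O(n\log\log n))$. You instead prove the stronger statement that each marginal has an everywhere-finite moment generating function, pass to the joint MGF by generalized H\"older, and invoke the classical ``MGF finite near the origin'' determinacy criterion; and you make the divisor-sum bound self-contained via the Euler product $\prod_p\frac12\bigl[(1-1/p)^{-m}+(1+1/p)^{-m}\bigr]$, split at $p=m$ with Mertens below and a Taylor expansion above. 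Both arguments ultimately rest on the same quantitative input, $\ex\bigl(F_{\X}(t)^{2m}\bigr)=\exp(O(m\log\log m))$, and your Euler-product derivation of it checks out (the identity $\sum_{j\ge0}\binom{2j+m-1}{m-1}x^{2j}=\frac12[(1-x)^{-m}+(1+x)^{-m}]$ is the even part of the binomial series, the comparison $(1+1/p)^{-m}\le(1-1/p)^{-m}$ handles $p\le m$, and $\cosh(m/p)=1+O(m^2/p^2)$ handles $p>m$). What each approach buys: yours is fully self-contained and yields the stronger conclusion of exponential integrability, at the cost of a slightly longer computation; the paper's is shorter via the external lemma and uses the more general Carleman criterion, which would still work if the moments grew as fast as $(2n)!$, whereas the MGF route requires the moments to be at most of factorial size. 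For the moments at hand, which are only $(\log n)^{O(n)}$, both criteria apply with ample room.
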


    \begin{proof}
   It is sufficient to  show that the moments satisfy the  
    Carleman condition \cite[Theorem 15.11]{Sc}:
    \begin{align}\label{eq: carleman condition}
            \sum_{n = 1}^\infty \left|\ex\big(F_{\X}(t_i)^{2n}\big)\right|^{-\frac{1}{2n}} = \infty,
         \end{align}
        for all $1\leq i\leq k$. 
            
        Let $t\in [0, 1]$ be fixed. Combining the identities \eqref{eq: MomentRandomPM}, \eqref{eq: FormulaMomentRandom+}, and \eqref{eq: FormulaMomentRandom-} with the upper bounds \eqref{eq: BoundBnt+} and \eqref{eq: BoundBnt-} (with $k=1$) we deduce that 
       $$
        \left|\ex\big(F_{\X}(t)^{2n}\big)\right| 
              \leq \frac{2^{2n}+1}{2\pi^{2n}} \sum_{a=1}^{\infty}\frac{d_{2n}(a^2)}{a^2}.
      $$  
        Now, it follows from Lemma 3.3 of \cite{La2} that 
        $$\sum_{a=1}^{\infty}\frac{d_{2n}(a^2)}{a^2} \leq \sum_{a=1}^{\infty}\frac{d_{2n}(a)^2}{a^2}= \exp\big(O(n\log\log n)\big).$$
        Combining these estimates implies the existence of  positive constants $C_1, C_2$ such that for all integers $n\geq 2$ 
        $$ \left|\ex\big(F_{\X}(t)^{2n}\big)\right|^{-\frac{1}{2n}} \geq C_2 \frac{1}{(\log n)^{C_1}},$$
        and hence 
        $$ \sum_{n=2}^{\infty}\left|\ex\big(F_{\X}(t)^{2n}\big)\right|^{-\frac{1}{2n}}\geq C_2 \sum_{n=2}^{\infty}\frac{1}{(\log n)^{C_1}}= \infty.$$
        This concludes the proof.
        
    \end{proof}

\section{Proof of convergence in the sense of finite distributions}\label{s: proof of convergence in finite distributions real}

\subsection{Convergence in the sense of finite distributions of the process}     In this section we will prove the following theorem.
        \begin{thm}\label{thm: convergence in finite distribution}
            As $Q\to \infty$ the sequence of processes $(\mathcal{F}_{Q})_Q$ converges to the process $F_{\X}$ in the sense of convergence of finite distributions.
            In other words, for every $k \geq 1$ and for every $k-$tuple $0 \leq t_1 < \cdots < t_k \leq 1$, the vectors 
           $ (\mathcal{F}_{Q}(t_1), \dots, \mathcal{F}_{Q}(t_k)) 
            $  
            converge in law, as $Q \rightarrow \infty$, to
                $(F_{\X}(t_1), \dots, F_{\X}(t_n))$. 
          
        \end{thm}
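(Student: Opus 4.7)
The plan is to apply the method of moments. Since the joint moments $M_{\X}(\mathbf{n})$ were shown above to be determinate, it is enough to prove that for every multi-index $\mathbf{n}=(n_1,\dots,n_k)\in(\mathbb{Z}_{\geq 0})^k$,
\[ \frac{1}{\pi^*(Q)}\sum_{Q\le p\le 2Q}\prod_{i=1}^{k}f_{p}(t_i)^{n_i} \ \longrightarrow\ M_{\X}(\mathbf{n}) \qquad (Q\to\infty). \]
Because the sign $\varepsilon_p$ in \eqref{eq: PolyaFourier} depends on the residue of $p$ modulo $4$, I would split the average over $p$ according to $p\bmod 4$; by \eqref{eq: MomentRandomPM} it then suffices to show that each sub-average converges to the corresponding random moment $M_{\X,\pm}(\mathbf{n})$, using the expansions \eqref{eq: Polya1mod4} and \eqref{eq: Polya3mod4}.

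The first key step is to shorten the P\'olya expansion: the trivial error term forces $Z\ge \sqrt{p}\log p$, which is far too long for a moment computation. To bypass this I would invoke the recent result of the second author from \cite{La}, which allows $Z$ to be replaced by a much shorter length, say $Y=(\log Q)^{A}$, at the cost of discarding an exceptional set $\mathcal{E}(Q)\subset[Q,2Q]$ of cardinality $o(\pi^*(Q))$ (morally the primes whose quadratic $L$-function has a Landau--Siegel zero). The contribution of $\mathcal{E}(Q)$ to the moment would be controlled separately via the pointwise bound $\|f_p\|_\infty\ll\sqrt{p}$ together with Cauchy--Schwarz and, if needed, a quadratic large sieve estimate applied to a higher moment. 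Once this short truncation is justified, expanding the product yields, up to a negligible error,
\[ \prod_{i=1}^{k} f_p(t_i)^{n_i}\ =\ \frac{\varepsilon_p^{n}}{(2\pi i)^{n}}\sum_{\mathbf{b}}\Big(\frac{N(\mathbf{b})}{p}\Big)\,H_{\mathbf{t}}(\mathbf{b}), \]
where $n=n_1+\cdots+n_k$, $\mathbf{b}=(b_{i,j})$ runs over tuples of nonzero integers with $|b_{i,j}|\le Y$, $N(\mathbf{b}):=\prod_{i,j}b_{i,j}$ and $H_{\mathbf{t}}(\mathbf{b}):=\prod_{i,j}(1-e(-b_{i,j}t_i))/b_{i,j}$.

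Averaging over primes in a fixed residue class modulo $4$ then reduces the whole computation to the character sum average
\[ \frac{1}{\#\{Q\le p\le 2Q:\,p\equiv\pm 1\,(\bmod 4)\}}\sum_{\substack{Q\le p\le 2Q\\ p\equiv\pm 1\,(\bmod 4)}}\Big(\frac{N}{p}\Big), \]
which by quadratic reciprocity becomes an average of a fixed primitive Dirichlet character (the Kronecker symbol attached to $N$) evaluated on primes in an arithmetic progression. A Siegel--Walfisz-type input then shows that this tends to $\mathbf{1}[N\text{ is a square}]$ uniformly for $|N|\le Y^{n}$, which is comfortably within the classical zero-free region when $Y$ is a fixed power of $\log Q$. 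Collecting the surviving diagonal tuples (those with $N(\mathbf{b})$ a square) and matching with \eqref{eq: FormulaMomentRandom+} and \eqref{eq: FormulaMomentRandom-} recovers $M_{\X,\pm}(\mathbf{n})$, hence $M_{\X}(\mathbf{n})$ via \eqref{eq: MomentRandomPM}. The main obstacle is precisely the exceptional set $\mathcal{E}(Q)$: assuming GRH, the short truncation is essentially automatic, but unconditionally one must combine the sharp truncation theorem of \cite{La} with a careful bound for the contribution of the at most one possible ``exceptional'' discriminant in the window $[Q,2Q]$.
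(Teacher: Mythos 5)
Your proposal follows essentially the same route as the paper: method of moments (justified by determinacy), splitting the average over $p\bmod 4$, shortening the P\'olya expansion via the truncation result of \cite{La} outside a small exceptional set of primes, and then evaluating the resulting averages of Kronecker symbols over primes via quadratic reciprocity, with the diagonal (square) tuples producing $M_{\X,\pm}(\mathbf{n})$ and the possible exceptional discriminant handled through Siegel's theorem. Two quantitative points in your sketch would need repair. First, the truncation length: Lemma \ref{lem: Tail2} only controls dyadic blocks down to $N_1\geq \exp\big((\log\log Q)^2\big)$, so you cannot take $Y=(\log Q)^A$; the paper takes $Y=\exp((\log Q)^{1/3})$, and consequently the conductors $|N(\mathbf{b})|\leq Y^{n}$ are \emph{not} within the Siegel--Walfisz range of fixed powers of $\log Q$. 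This is precisely why the exceptional modulus $q_1$ cannot be avoided and why one needs \eqref{NoExceptional} together with the ineffective lower bound \eqref{LowerBoundSiegel} to show that the terms $a=|q_1|m^2$ contribute only $O_A((\log Q)^{-A})$ --- you anticipate this at the end, but it contradicts your earlier claim that the conductors lie ``comfortably within the classical zero-free region.'' Second, the pointwise bound $\|f_p\|_\infty\ll\sqrt{p}$ is far too weak to absorb the exceptional set $\mathcal{E}(Q)$, whose cardinality is only $\ll Q\exp(-\log Q/(20\log\log Q))$: the resulting error $Q^{n/2}\exp(-\log Q/(20\log\log Q))/\pi^*(Q)$ diverges. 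One should instead use $\|f_p\|_\infty\ll\log p$ (P\'olya--Vinogradov), or, as the paper does, simply bound each truncated Fourier sum trivially by $O(\log Q)$ for $p\in\mathcal{E}(Q)$. With these corrections your argument matches the paper's proof of Proposition \ref{thm: moments theorem}.
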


        
        \noindent In order to prove this result it suffices to show that the joint moments of $(\mathcal{F}_{Q}(t_1), \dots, \mathcal{F}_{Q}(t_n))$ converge to those of $(F_{\X}(t_1), \dots, F_{\X}(t_n))$ as $Q\to \infty$, since it follows from Section \ref{s: the limiting moment} that the distribution of the latter random vector is completely determined by its joint moments. Therefore, Theorem \ref{thm: convergence in finite distribution} is a direct consequence of the following proposition. 
      
        \begin{pro}\label{thm: moments theorem}
            Let $A>1$ be a fixed constant. Let $k$ be a positive integer, $\mathbf{n} = (n_1, \dots, n_k)\in (\mathbb{Z}_{\geq 0})^k$ and $0 \leq t_1 < \cdots < t_k \leq 1$ be real numbers. Then we have
\begin{equation}\label{AsymptoticMomentsMQ}
                \frac{1}{\pi^*(Q)} \sum_{Q\leq p \leq 2Q} \prod_{i =1}^k f_{p}(t_i)^{n_i}= M_{\X}(\mathbf{n}) + O_{\mathbf{n}, A}\left(\frac{1}{(\log Q)^{A}}\right), 
            \end{equation}
          where $M_{\X}(\mathbf{n})$ is defined in \eqref{eq: limiting moment}, and the implicit constant in the error term is not effective. 
        \end{pro}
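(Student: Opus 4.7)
The plan is to prove \eqref{AsymptoticMomentsMQ} by the method of moments, treating the sub-families $p \equiv 1 \pmod 4$ and $p \equiv 3 \pmod 4$ separately through the real-valued P\'olya expansions \eqref{eq: Polya1mod4} and \eqref{eq: Polya3mod4}. Once each sub-family is handled, combining the two halves through the identity \eqref{eq: MomentRandomPM} together with the prime number theorem in arithmetic progressions yields $M_{\X}(\mathbf{n})$, so it suffices to prove
$$ \frac{1}{\pi^*_\pm(Q)} \sum_{\substack{Q \leq p \leq 2Q \\ p \equiv \pm 1 \,(4)}} \prod_{i=1}^k f_p(t_i)^{n_i} = M_{\X,\pm}(\mathbf{n}) + O_{\mathbf{n}, A}\bigl((\log Q)^{-A}\bigr). $$

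The heart of the argument is a drastic truncation of the P\'olya expansion via the recent result of the second author \cite{La}, which allows replacing the cutoff $|n| \leq \sqrt{p}(\log p)^A$ in \eqref{eq: Polya1mod4}--\eqref{eq: Polya3mod4} by a much shorter one $|n| \leq Z := (\log Q)^{B}$, at the cost of excluding an exceptional set $E(Q) \subset [Q, 2Q]$ of density at most $(\log Q)^{-A'}$, where $B$ and $A'$ are chosen large in terms of $A$ and $\mathbf{n}$. After raising to the $n_i$-th power and multiplying, the quantity $\prod_{i} f_p(t_i)^{n_i}$ becomes, for $p \notin E(Q)$ and, say, $p\equiv 1\pmod 4$,
$$ \frac{1}{\pi^n} \sum_{1 \leq m_{i,j} \leq Z} \left( \prod_{i,j} \frac{\sin(2\pi m_{i,j} t_i)}{m_{i,j}} \right) \legendre{\prod_{i,j} m_{i,j}}{p} \; + \; (\textup{negligible}), $$
so that the moment computation reduces to an average of Legendre symbols $\legendre{M}{p}$ with $M := \prod m_{i,j}$ of size at most $Z^n$.

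The averaged Legendre symbol then splits according to whether $M$ is a square. If $M = \square$, then $\legendre{M}{p}=1$ for $p\nmid M$ and the average over $p\equiv 1\pmod 4$ equals $\frac12 + O((\log Q)^{-A})$ by PNT in arithmetic progressions. Otherwise, writing $M = s^2 M'$ with $M'>1$ squarefree, quadratic reciprocity shows that $p \mapsto \legendre{M'}{p}$ is a non-principal Dirichlet character of conductor at most $4|M'|$, and Siegel--Walfisz yields the non-effective bound
$$ \frac{1}{\pi^*(Q)} \sum_{Q \leq p \leq 2Q} \legendre{M'}{p} \ll \exp\bigl(-c\sqrt{\log Q}\bigr), $$
uniformly for $|M'| \leq (\log Q)^{Bn+1}$; this is precisely the source of the non-effectivity in the statement. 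Summing the non-square contribution against $\prod |1/m_{i,j}| \ll (\log Z)^n$ gives an admissible error. The remaining square contribution, after extending the outer sum over $\mathbf{m}$ to all of $\mathbb{N}^n$ (the tail being controlled by $\sum_{a > Z} d_n(a^2)/a^2 \to 0$, in the spirit of Section \ref{s: Random Fourier} and using Lemma 3.3 of \cite{La2}), matches $\frac{1}{2} M_{\X,+}(\mathbf{n})$ through the explicit formula \eqref{eq: FormulaMomentRandom+}; the analogous computation with the $1-\cos$ kernel for $p\equiv 3\pmod 4$ delivers $\frac12 M_{\X,-}(\mathbf{n})$.

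The main obstacle is the bookkeeping for the exceptional set $E(Q)$: the trivial estimate $|f_p(t)|^n \ll p^{n/2}$ is far too weak to absorb the density $(\log Q)^{-A'}$, so I would use Cauchy--Schwarz with a high-moment bound of the shape
$$ \frac{1}{\pi^*(Q)} \sum_{Q \leq p \leq 2Q} |f_p(t)|^{2N} \ll_N 1, $$
obtained by opening the P\'olya expansion and invoking the quadratic large sieve estimates of Heath--Brown \cite{HB} and Montgomery--Vaughan \cite{MoVaBook}. Balancing this bound against $(\log Q)^{-A'}$ then dictates the precise interplay between $Z$, $B$, $A'$, $N$, $A$, and $\mathbf{n}$, and is the delicate point that makes the whole truncation strategy work unconditionally.
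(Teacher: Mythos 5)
Your overall strategy --- splitting by the residue of $p$ modulo $4$, shortening the P\'olya expansion via the second author's recent work, and reducing to averages of $\legendre{M}{p}$ with a square/non-square dichotomy --- is indeed the right one, but there is a genuine gap at the crucial truncation step. The only unconditional shortening tool available (Lemma \ref{lem: Tail2}, i.e.\ Propositions 4.2--4.3 of \cite{La}) controls dyadic blocks only for $N_1\geq \exp\big((\log\log Q)^2\big)$, so the P\'olya sum can be shortened at best to a threshold of that size (the paper takes $Y=\exp((\log Q)^{1/3})$); it cannot be brought down to $Z=(\log Q)^{B}$ as you assume. Consequently the moduli $M=\prod_{i,j} m_{i,j}$ range up to $Y^{n}=\exp\big(n(\log Q)^{1/3}\big)$, far outside the Siegel--Walfisz range, and your appeal to Siegel--Walfisz ``uniformly for $|M'|\leq (\log Q)^{Bn+1}$'' does not apply. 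One must instead use the bound \eqref{NoExceptional}, valid for all squarefree $d\leq\exp(\sqrt{\log Q})$ with at most one exceptional discriminant $q_1$, and then separately control the contribution of the terms $M=|q_1|m^2$ via Siegel's ineffective lower bound \eqref{LowerBoundSiegel}. This exceptional-discriminant analysis, entirely absent from your proposal, is precisely what produces the $(\log Q)^{-A}$ error term and the ineffective constant in the statement.

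By contrast, the ``main obstacle'' you identify is not the real obstacle, and your proposed fix is itself problematic. If one first applies \eqref{eq: definition Legendre path} with $Z=Q^{21/40}$ --- valid for \emph{every} $p$ --- then each factor $\sum_{a\leq Z}\chi_p(a)\sin(2\pi a t_i)/a$ is trivially $\ll\log Q$, so the exceptional set of Lemma \ref{lem: Tail2} (of size $\ll Q\exp(-\log Q/(10\log\log Q))$, far smaller than your assumed density $(\log Q)^{-A'}$; and even that density would suffice upon taking $A'>n+A$) contributes $\ll |\mathcal{E}(Q)|(\log Q)^{n}/\pi^*(Q)$, which is negligible with no Cauchy--Schwarz needed. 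Moreover the high-moment bound $\pi^*(Q)^{-1}\sum_{p}|f_p(t)|^{2N}\ll_N 1$ does not follow by ``opening the P\'olya expansion and invoking the quadratic large sieve'': the $2N$-th power of a length-$Q$ sum produces a coefficient sequence of length $Q^{N}$, and the factor $Q+Y$ with $Y=Q^{N}$ in Heath--Brown's inequality (Lemma \ref{HBLargeSieve}) then gives a bound of order $Q^{N-1+\ep}$ rather than $O_N(1)$, unless one first truncates the expansion --- which is circular.
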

        
Conditionally on GRH, the first author established this result in her thesis \cite{Hu}, with a much stronger error term $Q^{-1/2+o(1)}$. In order to compute these moments unconditionally, the main ingredient is a recent work of the second author \cite{La}, which allows us to substantially reduce the length of the P\'olya Fourier expansion of $f_p$, for almost all primes $Q\leq p\leq 2Q$ (see Lemma \ref{lem: Tail2} below).
Another difficulty in proving Proposition \ref{thm: moments theorem} (which explains the weak error term of \eqref{AsymptoticMomentsMQ}) arises from the possible existence of Landau-Siegel zeros.  Let $x$ be a large real number. By the results of Chapter 20 of \cite{Da}, it follows that for all squarefree integers $|d|\leq \exp(\sqrt{\log x})$ with at most one exception $q_1$, we have
\begin{equation}\label{NoExceptional}
\sum_{p\leq x} \chi_d(p) \ll x\exp\left(-c\sqrt{\log x}\right),
\end{equation}
for some positive constant $c$, where $\chi_d$ denotes the Kronecker symbol attached to $d$.
Moreover, by Siegel's Theorem 
this exceptional discriminant if it exists must satisfy
\begin{equation}\label{LowerBoundSiegel}
 |q_1|\gg_A (\log x)^{4A},
 \end{equation}
 for any fixed constant $A>0$, where the implicit constant is not effective.
 To lighten our notation we shall denote by $\chi_p:=\left(\frac{\cdot}{p}\right)$ the Legendre symbol modulo a prime $p$, throughout the remaining part of the paper. 
 The following lemma is standard (see  \cite{Da}) but we include its proof for the sake of completeness. 
  \begin{lem}\label{Siegel}
Let $Q$ be large and $n\leq \exp\left(\sqrt{\log Q}\right)$ be a positive integer. Then, we have
$$
\sum_{\substack{Q\leq p\leq 2Q\\ p\equiv 1 \bmod 4}} \chi_{p}(n)=\begin{cases} \displaystyle{\frac{\pi^*(Q)}{2}}+O\left(Q\exp\left(-c\sqrt{\log Q}\right)\right), & \textup{ if } n= m^2,\\ 
O\left(Q\exp\left(-c\sqrt{\log Q}\right)\right), & \textup{ if } n \text{ is not of the form } m^2 \text{ or } |q_1| \cdot m^2,\end{cases}
$$
for some positive constant $c$.
Moreover, the same estimate holds for $ \sum_{\substack{Q\leq p\leq 2Q\\ p\equiv 3 \bmod 4}} \chi_{p}(n)$. 
\end{lem}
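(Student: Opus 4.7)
The plan is to detect the congruence $p\equiv 1\pmod 4$ via the quadratic character modulo $4$, to reduce $\chi_p(n)$ for varying $p$ to fixed Kronecker symbols $\legendre{\pm k}{p}$, and then to apply \eqref{NoExceptional} to each of these characters.

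First I write $n=m^2 k$ uniquely with $k$ a positive squarefree integer. For $p\nmid n$, multiplicativity of the Legendre symbol gives $\chi_p(n)=\chi_p(m^2)\chi_p(k)=\legendre{k}{p}$, while $\chi_p(n)=0$ for $p\mid n$. Using the detection $\mathbf{1}_{p\equiv 1\bmod 4}=\tfrac12(1+\chi_{-4}(p))$ (valid for odd primes) together with the multiplicativity $\chi_{-4}(p)\legendre{k}{p}=\legendre{-k}{p}$ of the Kronecker symbol, I obtain
\begin{equation*}
\sum_{\substack{Q\leq p\leq 2Q\\ p\equiv 1\bmod 4}}\chi_p(n)
=\frac12\sum_{\substack{Q\leq p\leq 2Q\\ p\nmid n}}\left[\legendre{k}{p}+\legendre{-k}{p}\right].
\end{equation*}
Extending each inner sum from $\{p\nmid n\}$ to all primes in $[Q,2Q]$ introduces at most $\omega(n)\leq\log n\leq\sqrt{\log Q}$ extra unit-size terms, which is harmless compared to the target error.

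When $n=m^2$ one has $k=1$, so $\legendre{1}{p}=1$ and $\legendre{-1}{p}=\chi_{-4}(p)$, and the right-hand side becomes $\tfrac12\pi^*(Q)+\tfrac12\sum_{Q\leq p\leq 2Q}\chi_{-4}(p)+O(\sqrt{\log Q})$. The Siegel--Walfisz theorem (or \eqref{NoExceptional} applied with $d=-1$, which is permissible since Siegel's bound $|q_1|\gg_A(\log Q)^{4A}$ forces $q_1\neq -1$ for large $Q$) bounds the $\chi_{-4}$-sum by $O(Q\exp(-c\sqrt{\log Q}))$, yielding the main term $\pi^*(Q)/2$.

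When $n$ is neither of the form $m^2$ nor $|q_1|m^2$, one has $k\geq 2$ squarefree and $k\neq|q_1|$; hence both discriminants $d=k$ and $d=-k$ are squarefree, satisfy $|d|=k\leq n\leq\exp(\sqrt{\log Q})$, and differ from $q_1$. Applying \eqref{NoExceptional} at $x=2Q$ to each yields $\sum_{Q\leq p\leq 2Q}\chi_d(p)\ll Q\exp(-c\sqrt{\log Q})$, and summing gives the claimed bound. The case $p\equiv 3\pmod 4$ is treated identically after replacing $\tfrac12(1+\chi_{-4}(p))$ by $\tfrac12(1-\chi_{-4}(p))$, which produces $\legendre{k}{p}-\legendre{-k}{p}$ in place of the sum without otherwise affecting the argument. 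The only delicate point is the bookkeeping: since $q_1$ is squarefree and the two Kronecker discriminants engaged are $\pm k$, the exceptional character $\chi_{q_1}$ interferes exactly when $k=|q_1|$, which corresponds precisely to the excluded form $n=|q_1|m^2$.
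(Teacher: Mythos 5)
Your proof is correct and follows essentially the same route as the paper: reduce to the squarefree kernel $k$ of $n$, detect the congruence class via $\tfrac12(1\pm\chi_{-4}(p))$ (which is exactly the paper's identity $\sum_{p\equiv 1\,(4)}\chi_d(p)=\tfrac12\sum_p\chi_d(p)+\tfrac12\sum_p\chi_{-d}(p)$), pass from the Legendre symbol $\legendre{\pm k}{p}$ to the Kronecker characters $\chi_{\pm k}$ via reciprocity, and apply \eqref{NoExceptional} to both, noting that the exceptional character can only arise when $k=|q_1|$. The only cosmetic difference is that you treat the square case through the $\chi_{-4}$ character sum rather than quoting the prime number theorem in arithmetic progressions directly, which amounts to the same estimate.
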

\begin{proof} We only prove the estimate for the character sum over primes $p\equiv 1 \pmod 4$ since the proof in the case $p\equiv 3 \pmod 4$ is similar.
 The first estimate when $n$ is a square follows simply from the prime number theorem in arithmetic progressions together with the trivial bound $\omega(n)\leq \log n$, where $\omega(n)$ is the number of prime factors of $n$. Now, suppose that $n$ is not a square, and write $n=dm^2$ where $d$ is squarefree and $d\neq |q_1|$. By the law of quadratic reciprocity we deduce that 
$$ \sum_{\substack{Q\leq p\leq 2Q\\ p\equiv 1 \bmod 4}} \chi_p(n)= \sum_{\substack{Q\leq p\leq 2Q\\ p\equiv 1 \bmod 4}}\chi_p(d) + O(\omega(n))=  \sum_{\substack{Q\leq p\leq 2Q\\ p\equiv 1 \bmod 4}}\chi_d(p) + O(\sqrt{\log Q}).
$$ 
The second estimate follows from \eqref{NoExceptional}, together with the fact that 
$$\sum_{\substack{Q\leq p\leq 2Q\\ p\equiv 1 \bmod 4}}\chi_d(p)= \frac{1}{2} \sum_{Q\leq p\leq 2Q}\chi_d(p) + \frac{1}{2}\sum_{p\leq Q}\chi_{-d}(p).$$

\end{proof}

In order to compute the joint moments of $f_{p}(t_1), \dots, f_{p}(t_k)$ we need to considerably shorten the sum in the main term of \eqref{eq: definition Legendre path}, for all $t\in \{t_1, t_2, \dots, t_k\}$. To this end we shall use the following result from a recent work of the second author \cite{La} on the distribution of large values of quadratic character sums. 

\begin{lem}\label{lem: Tail2} Let $B>0$ be a fixed constant. For all real numbers $N_1, N_2$ verifying  

\noindent $ \exp\big((\log\log Q)^2\big)\leq N_1 < N_2\leq 2N_1\leq Q^{21/40}$, the number of primes $Q\leq p\leq 2Q$ such that 
$$ 
\max_{\alpha\in [0, 1)}\left|\sum_{N_1\leq n\leq N_2} \frac{\chi_p(n) e(\alpha n)}{n}\right|\geq \frac{1}{(\log N_1)^B},
$$
is 
$$ \ll_B Q\exp\left(-\frac{\log Q}{10 \log\log Q}\right).$$
\end{lem}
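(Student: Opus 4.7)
The plan is a large-deviation argument in two steps. First, I would discretize the supremum over $\alpha \in [0,1)$: since $|\partial_\alpha S_p(\alpha)| \ll N_2$ uniformly in $p$, where $S_p(\alpha) := \sum_{N_1 \le n \le N_2} \chi_p(n) e(\alpha n)/n$, the supremum is comparable up to an additive error of $(\log N_1)^{-B-1}$ to the maximum of $|S_p(\alpha)|$ over a grid $\mathcal{A}$ of spacing $\asymp 1/(N_2 (\log N_1)^{B+1})$ and size $|\mathcal{A}| \ll N_2(\log N_1)^{B+1}$. Then for each $\alpha \in \mathcal{A}$ I would compute a high moment of $S_p(\alpha)$ averaged over $Q \le p \le 2Q$, using Lemma~\ref{Siegel} as the source of orthogonality, and conclude via Markov's inequality and a union bound over $\mathcal{A}$.

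Concretely, for fixed $\alpha$ and an even integer $2k$ with $2k \log N_2 \le \sqrt{\log Q}$, expanding the $2k$-th moment gives
\[
\sum_{Q \le p \le 2Q} |S_p(\alpha)|^{2k} = \sum_{\vec n \in [N_1,N_2]^{2k}} \frac{e\bigl(\alpha(n_1 + \cdots + n_k - n_{k+1} - \cdots - n_{2k})\bigr)}{n_1 \cdots n_{2k}} \sum_{Q \le p \le 2Q} \chi_p(n_1 \cdots n_{2k}).
\]
The constraint on $k$ ensures $n_1 \cdots n_{2k} \le \exp(\sqrt{\log Q})$, so Lemma~\ref{Siegel} evaluates the inner sum: the main contribution comes from tuples with $n_1 \cdots n_{2k}$ a perfect square, while the exceptional discriminant $q_1$ contributes a genuinely smaller term thanks to the Siegel bound \eqref{LowerBoundSiegel}. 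Parametrising the diagonal by $n_1 \cdots n_{2k} = m^2$ and using standard divisor estimates such as $d_{2k}(m^2) \ll m^{o(1)}$ yields a moment bound of the shape $Q \cdot (Ck \log N_1)^{O(k^2)}/N_1^k$. Markov's inequality followed by the union bound then produces an exceptional count
$\ll N_2 (\log N_1)^{(2k+1)B} \cdot Q(Ck \log N_1)^{O(k^2)}/N_1^k$;
choosing $k \asymp \sqrt{\log Q}/\log N_2$ to balance the terms should deliver the claimed $Q \exp(-\log Q/(10\log\log Q))$.

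The main obstacle is the regime where $N_2$ is close to the upper end $Q^{21/40}$: there the Siegel constraint forces $k \asymp 1/\sqrt{\log Q} < 1$, so the moment method via Lemma~\ref{Siegel} alone becomes critical, because most tuples land outside the range where the orthogonality input is available. In this range one must combine Lemma~\ref{Siegel} with an auxiliary tool — for instance Heath-Brown's quadratic large sieve \cite{HB} applied to a dyadic decomposition of $[N_1, N_2]$, or a Burgess-type bound for twisted character sums — to handle the tuples whose product exceeds the Siegel cutoff. Calibrating these inputs so that the quasi-polynomial saving $\exp(-\log Q/(10 \log\log Q))$ holds uniformly across the full range $\exp((\log\log Q)^2) \le N_1 \le Q^{21/40}/2$ is the delicate quantitative heart of the argument.
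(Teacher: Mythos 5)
The paper does not prove this lemma from scratch: it is quoted directly from Propositions 4.2 and 4.3 of the second author's paper \cite{La} (after embedding the primes $Q\le p\le 2Q$ into the fundamental discriminants $|d|\le 2Q$), where the proof rests on the quadratic large sieve rather than on orthogonality coming from the prime number theorem for the characters $\chi_d$. Your attempt is therefore a genuinely different route, but it has a quantitative gap that cannot be repaired within the framework you set up.

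The obstruction is the Siegel barrier built into Lemma \ref{Siegel}: that lemma only evaluates $\sum_{Q\le p\le 2Q}\chi_p(m)$ for $m\le \exp(\sqrt{\log Q})$, so your moment parameter is forced to satisfy $2k\log N_2\le \sqrt{\log Q}$. The diagonal saving in the $2k$-th moment is at best $N_1^{-k}=\exp(-k\log N_1)\ge \exp(-\tfrac12\sqrt{\log Q})$, and since $\sqrt{\log Q}=o\big(\log Q/(10\log\log Q)\big)$, this is exponentially weaker than the bound the lemma asserts --- for \emph{every} admissible $N_1$, not only near the endpoint $Q^{21/40}$ as you suggest. Worse, for the $k$ you propose, $k\asymp\sqrt{\log Q}/\log N_2$, the divisor-type factor in your moment bound is $\exp\big(O(k^2\log(k\log N_1))\big)$, and when $\log N_1\asymp(\log\log Q)^2$ this is $\exp\big(\gg\log Q/(\log\log Q)^3\big)$, which actually exceeds the saving $N_1^{-k}=\exp(-O(\sqrt{\log Q}))$, so the moment bound degenerates. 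Finally, the union bound over the grid $\mathcal A$ of size $\asymp N_2(\log N_1)^{B+1}$ is unaffordable at the top of the range: the off-diagonal (non-square $m$) contribution to each moment is already $\gg Q\exp(-c\sqrt{\log Q})$ from Lemma \ref{Siegel}, and multiplying by $|\mathcal A|\asymp N_2\asymp Q^{21/40}$ gives something larger than $Q$. Your proposed patches (Heath--Brown's large sieve, Burgess) are indeed the right ingredients --- they are essentially what \cite{La} uses --- but they must replace the Siegel-based orthogonality as the backbone of the whole argument (together with a non-trivial treatment of $\max_\alpha$ that avoids a pointwise union bound), not merely supplement it in the extreme range.
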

\begin{proof} This is a direct consequence of Propositions 4.2 and 4.3 of \cite{La} by embedding the set of primes $Q\leq p\leq 2Q$ into the set of all fundamental discriminants $|d|\leq 2Q$.
\end{proof}

We now have all the ingredients to prove Proposition \ref{thm: moments theorem}.

\begin{proof}[Proof of Proposition \ref{thm: moments theorem}] 
Let $k$ be a positive integer, $\mathbf{n} = (n_1, \dots, n_k)\in (\mathbb{Z}_{\geq 0})^k$ and $0 \leq t_1 < \cdots < t_k \leq 1$ be real numbers. 
Throughout the proof we shall assume that $Q$ is sufficiently large in terms of $n=n_1+\cdots+n_k$. To shorten our notation we define 
             $$ 
            M_{Q}(\mathbf{n}):= \frac{1}{\pi^*(Q)} \sum_{Q\leq p \leq 2Q} \prod_{i =1}^k f_{p}(t_i)^{n_i}.
        $$
Then, we note that 
$$M_{Q}(\mathbf{n})= \frac{1}{2}\left(M_{Q,+}(\mathbf{n})+ M_{Q,-}(\mathbf{n})\right),$$
where
        \begin{equation}\label{eq: moment sequence}
            M_{Q,+}(\mathbf{n}):= \frac{2}{\pi^*(Q)} \sum_{\substack{Q\leq p \leq 2Q\\ p \equiv 1 \, \bmod 4}} \prod_{i =1}^k f_{p}(t_i)^{n_i}, \quad M_{Q,-}(\mathbf{n}):= \frac{2}{\pi^*(Q)} \sum_{\substack{Q\leq p \leq 2Q\\ p \equiv 3 \, \bmod 4}} \prod_{i =1}^k f_{p}(t_i)^{n_i}.
        \end{equation}
       Therefore, in view of \eqref{eq: MomentRandomPM} it suffices to establish the following asymptotic formulas 
        \begin{equation}\label{eq: AsymptoticPlusMinus}
             M_{Q, +}(\mathbf{n}) = M_{\X, +}(\mathbf{n}) + O_{\mathbf{n},A}\left(\frac{1}{(\log Q)^{A}}\right), \quad 
             M_{Q, -}(\mathbf{n}) = M_{\X, -}(\mathbf{n}) + O_{\mathbf{n},A}\left(\frac{1}{(\log Q)^{A}}\right).
        \end{equation}
       We shall only prove this estimate for $M_{Q, +}(\mathbf{n})$, since the proof for $M_{Q, -}(\un)$ is similar. 
       
       Using \eqref{eq: definition Legendre path} with $Z:= Q^{21/40}$ we get

$$
M_{Q, +}(\un)= \frac{1}{\pi^n} \frac{2}{\pi^*(Q)} \sum_{\substack{Q\leq p\leq 2Q\\ p\equiv 1\bmod 4}}\prod_{i=1}^k \left(\sum_{a\leq Z} \chi_p(a) \frac{\sin(2\pi at_i)}{a}+ O\left(Q^{-1/50}\right)\right)^{n_i}.
$$
We now define $Y:=\exp((\log Q)^{1/3})$. 
Let $L_1:=\lfloor \log Y /\log 2\rfloor$, $L_2:= \lfloor \log Z/\log 2\rfloor$, and put $s_{L_1}:= Y$, $s_{L_2+1}:= Z$, and $s_{\ell}:=2^{\ell}$ for $L_1+1\leq \ell\leq L_2$.
Then we have
  \begin{equation}\label{PolyaSplit}
\begin{aligned}
\max_{t\in [0, 1)} \left|\sum_{Y\leq a\leq Z} \chi_p(a) \frac{\sin(2\pi at)}{a}\right| 
&  \leq   \sum_{L_1\leq \ell \leq L_2} \max_{t \in [0, 1)} \Bigg|\sum_{s_{\ell}\leq a\leq s_{\ell+1}}  \frac{\chi_p(a)e(at)}{a}\Bigg|.
\end{aligned}
\end{equation}
Furthermore, it follows from Lemma \ref{lem: Tail2} with the choice  $B=6An+1$ that 
\begin{equation}\label{BoundTailLarge}
\sum_{L_1\leq \ell \leq L_2} \max_{t \in [0, 1)} \Bigg|\sum_{s_{\ell}\leq a\leq s_{\ell+1}}  \frac{\chi_p(a)e(at)}{a}\Bigg|
\ll  
\sum_{L_1\leq \ell \leq L_2} \frac{1}{\ell^B} \ll \frac{1}{(\log Q)^{2An}},
\end{equation}
for all primes $Q\leq p\leq 2Q$ except for a set $\mathcal{E}(Q)$ of size 
$$ |\mathcal{E}(Q)| \ll_n L_2 Q \exp\left(-\frac{\log Q }{10 \log\log Q}\right)  \ll_n Q \exp\left(-\frac{\log Q}{20 \log\log Q}\right).$$
Combining the estimates \eqref{PolyaSplit} and \eqref{BoundTailLarge} we deduce that $M_{Q, +}(\un)$ equals
\begin{align*}
&\frac{2}{\pi^n\pi^*(Q)} \sum_{\substack{p\in [Q, 2Q]\setminus \mathcal{E}(Q)\\ p\equiv 1\bmod 4}}\prod_{i=1}^k \left(\sum_{a\leq Z} \chi_p(a) \frac{\sin(2\pi at_i)}{a}+ O\left(Q^{-1/50}\right)\right)^{n_i}
+ O_n\left(\exp\left(-\frac{\log Q}{30\log\log Q}\right)\right)\\
&= \frac{2}{\pi^n\pi^*(Q)}  \sum_{\substack{p\in [Q, 2Q]\setminus \mathcal{E}(Q)\\ p\equiv 1\bmod 4}}\prod_{i=1}^k \left(\sum_{a\leq Y} \chi_p(a) \frac{\sin(2\pi at_i)}{a}+ O\left(\frac{1}{(\log Q)^{2An}}\right)\right)^{n_i}+ O_n\left(\exp\left(-\frac{\log Q}{30\log\log Q}\right)\right)\\
&= \frac{2}{\pi^n\pi^*(Q)}\sum_{\substack{p\leq Q\\ p\equiv 1\bmod 4}}\prod_{i=1}^k \left(\sum_{a\leq Y} \chi_p(a) \frac{\sin(2\pi at_i)}{a}+ O\left(\frac{1}{(\log Q)^{2An}}\right)\right)^{n_i}+ O_n\left(\exp\left(-\frac{\log Q}{30\log\log Q}\right)\right).\\
\end{align*}
Expanding the main term we obtain
\begin{equation}\label{MQMain}
\begin{aligned}
M_{Q, +}(\un)
&= \frac{2}{\pi^n\pi^*(Q)}\sum_{\substack{Q\leq p\leq 2Q\\ p\equiv 1\bmod 4}}\sum_{a\leq Y^n}\chi_p(a) \frac{\B_{Y,\un, \ut, +}(a)}{a} +O _n\left(\frac{(\log Y)^n}{(\log Q)^{2An}}\right)\\
&= \frac{2}{\pi^n}\sum_{a\leq Y^n}\frac{\B_{Y,\un, \ut, +}(a)}{a} \frac{1}{\pi^*(Q)}\sum_{\substack{Q\leq p\leq 2Q\\ p\equiv 1\bmod 4}}\chi_p(a)  +O_n\left(\frac{1}{(\log Q)^{An}}\right),
\end{aligned}
\end{equation}
 where $\B_{Y,\un, \ut, +}(a)$ is defined by
 $$\B_{Y,\un, \ut, +}(a)= 
        \sum_{a_{1}\cdots a_{k} = a} \prod_{i = 1}^k \sum_{\substack{b_{i,1}, \cdots b_{i,n_i} = a_i \\ b_{i,j} \leq  Y}} \prod_{j = 1}^{n_i} \sin(2 \pi b_{i,j} t_i).$$ 
 We shall first estimate the contribution of the squares $a=m^2$ which gives rise to the main term of $M_{Q, +}(\un)$. Indeed, the contribution of these terms to the right hand side of \eqref{MQMain} equals
$$ 
\frac{1}{\pi^n}\sum_{m\leq Y^{n/2}}\frac{\B_{Y,\un, \ut, +}(m^2)}{m^2} \left(1+O\left(\exp\left(-c_0\sqrt{\log Q}\right)\right)\right) + O\left(\frac{1}{\pi^*(Q)}\sum_{m\leq Y^{n/2}}\frac{|\B_{Y,\un, \ut, +}(m^2)|\omega(m)}{m^2}\right),
$$
for some positive constant $c_0$, by the Prime Number Theorem for arithmetic progressions.
 Let $\varepsilon>0$ be small. By \eqref{eq: BoundBnt+} we have $\B_{Y,\un, \ut, +}(m^2), \B_{\un, \ut, +}(m^2)\ll_{\varepsilon, n} m^{\varepsilon}$.  Since $\omega(m)\leq \log m$ and $\B_{Y,\un, \ut, +}(\ell)= \B_{\un, \ut, +}(\ell)$ if $\ell\leq Y$, we deduce that this contribution equals 
\begin{equation}\label{SquaresContrib}
\begin{aligned}
&\frac{1}{\pi^n}\sum_{m\leq \sqrt{Y}}\frac{\B_{Y,\un, \ut, +}(m^2)}{m^2} + O_n\left(\sum_{\sqrt{Y}<m\leq Y^{n/2}}\frac{1}{m^{2-\varepsilon}}+ \exp\left(-c_0\sqrt{\log Q}\right)\right)\\
= & \frac{1}{\pi^n}\sum_{m=1}^{\infty}\frac{\B_{\un, \ut, +}(m^2)}{m^2} + O_n\left(\sum_{m> \sqrt{Y}}\frac{1}{m^{2-\varepsilon}}+\exp\left(-c_0\sqrt{\log Q}\right)\right)\\
=  & M_{\X, +}(\un)+ O\left(Y^{-1/3}\right),\\
\end{aligned}
\end{equation}
by \eqref{eq: FormulaMomentRandom+}. 
Next, we bound the contribution of the terms $a$ which are not of the form $m^2$ or $|q_1|m^2$. Since $Y^n\leq \exp\left(\sqrt{\log Q}\right)$ by our assumptions on $Y$ and $Q$, it follows from Lemma \ref{Siegel} that the contribution of these terms to the main term on the right hand side of \eqref{MQMain} is 
\begin{equation}\label{NonSquares}
\ll \exp\left(-\frac{c}{2}\sqrt{\log Q}\right)\sum_{a\leq Y^n}\frac{|\B_{Y,\un, \ut, +}(a)|}{a} \ll Y^{n/2} \exp\left(-\frac{c}{2}\sqrt{\log Q}\right) \ll \exp\left(-\frac{c}{3}\sqrt{\log Q}\right),
\end{equation}
since $\B_{Y,\un, \ut, +}(a)\ll \sqrt{a}$, and where $c>0$ is the constant in Lemma \ref{Siegel}.

Finally, we bound the contribution of the terms $a=|q_1|m^2$. Using the bound $\B_{Y,\un, \ut, +}(a)\ll_{n} a^{1/4}$, and bounding the inner character sum in the right hand side of \eqref{MQMain} trivially, we deduce that the contribution of these terms is 
\begin{equation}\label{ExceptionalContrib}
\ll \sum_{m\leq Y^{n/2}/\sqrt{|q_1|}}\frac{\B_{Y,\un, \ut, +}(|q_1|m^2)}{|q_1|m^2}\ll_{n} |q_1|^{-3/4}\sum_{m\geq 1}\frac{1}{m^{3/2}}\ll_{A, n} (\log Q)^{-A},
\end{equation}
by \eqref{LowerBoundSiegel}.
Inserting the estimates \eqref{SquaresContrib}, \eqref{NonSquares}, and \eqref{ExceptionalContrib} in \eqref{MQMain} completes the proof.
\end{proof}

\subsection{Establishing convergence in the sense of finite distributions for the sequence of ``Fourier coefficients'' of the path}

In this section, we describe a different approach to establishing convergence in the sense of finite distributions. 
Let $C_0([0,1])$ be the subspace of $C([0,1])$ consisting of functions vanishing at
$0$, and let $C_0(\mathbb{Z})$ be the Banach space of complex-valued functions on
$\mathbb{Z}$ converging to $0$ at infinity with the sup norm. Then note that $f_p\in C_0([0,1])$ for all primes $p$. 
We will follow the notation in Chapter 6 of Kowalski's book \cite{Ko}.
Let $\textup{FT}_p= (\textup{FT}_p(h))_{h\in \mathbb{Z}}$ be defined by $\textup{FT}_p(0)=0$ and for $h\neq 0$
$$ \textup{FT}_p(h):= \int_0^1 f_p(t) e(-ht)dt. $$
By Proposition B.11.8 of \cite{Ko}, one can replace Theorem \ref{thm: convergence in finite distribution} (convergence in the sense of finite distributions of the process) in the proof of Theorem \ref{thm: main theorem}, by the convergence in the sense of finite distributions of the sequence of the Fourier coefficients $(\textup{FT}_p)_{Q\leq p\leq 2Q}$ in $C_0(\mathbb{Z})$. Here convergence in the sense of finite distributions of a sequence $(X_n)$ of $C_0(\mathbb{Z})$-valued random variables to $X$ means that for any $H\geq 1$, the vectors $(X_{n,h})_{|h|\leq H}$ converge in law to $(X_h)_{|h|\leq H}$, in the sense of convergence in law in $\mathbb{C}^{2H+1}$. 
More specifically, in our case one needs to compute the moments 
\begin{equation}\label{Eq.MomentsFourier}\frac{1}{\pi^*(Q)}\sum_{Q\leq p\leq 2Q} \prod_{0<|h|\leq H}\textup{FT}_p (h)^{j_h},
\end{equation}
where $\{j_h\}_{0<|h|\leq H}$ are non-negative integers, since $\textup{FT}_p(0)=0$ for all $p$. 

Let $p$ be a large prime number. It follows from formula (6.5) of  \cite[Chapter 6]{Ko} that 
$$ \textup{FT}_p(h)=\frac{1}{2\pi i h}\frac{\sin(\pi h/(p-1))}{\pi h/(p-1)} e\left(-\frac{-h}{2(p-1)}\right) \frac{1}{\sqrt{p}} \sum_{m=1}^{p-1} \legendre{m}{p} e\left(-\frac{hm}{p-1}\right), $$
for $h\neq 0$. Since 
$$ e\left(-\frac{hm}{p-1}\right)= e\left(-\frac{hm}{p}\right) +O_h\left(\frac{1}{p}\right),$$
for fixed $h$ and 
$$ \sum_{m=1}^{p-1} \legendre{m}{p} e\left(-\frac{hm}{p}\right)= \tau\left(\legendre{\cdot}{p}\right) \legendre{-h}{p},$$
we deduce that 
$$ \textup{FT}_p(h)= \frac{\varepsilon_p}{2\pi i h} \frac{\sin(\pi h/(p-1))}{\pi h/(p-1)} e\left(-\frac{-h}{2(p-1)}\right) \legendre{-h}{p} +O_h\left(\frac{1}{\sqrt{p}}\right), $$
where $\ep_p$ is defined in \eqref{eq: PolyaFourier}. By using the Taylor expansions of $\sin(\pi h/(p-1))$ and $e(-h/(2(p-1)))$ we deduce that 
$$\textup{FT}_p(h)= \frac{\ep_p}{2\pi i h}\legendre{-h}{p} +O_h\left(\frac{1}{\sqrt{p}}\right). $$
Using this estimate and taking $H$ and $\mathbf{j}=(j_h)_{0<|h|\leq H}$ fixed we obtain \begin{align*}
    \frac{1}{\pi^*(Q)}\sum_{Q\leq p\leq 2Q} \prod_{0<|h|\leq H}\textup{FT}_p (h)^{j_h}
    &= \prod_{0<|h|\leq H}\left(\frac{1}{2\pi i h}\right)^{j_h}\frac{1}{\pi^*(Q)}\sum_{\substack{Q\leq p\leq 2Q\\ p\equiv 1 \bmod 4}} \legendre{\prod_{0<|h|\leq H} (-h)^{j_h}}{p}\\
    &+\prod_{0<|h|\leq H}\left(\frac{1}{2\pi  h}\right)^{j_h}\frac{1}{\pi^*(Q)}\sum_{\substack{Q\leq p\leq 2Q\\ p\equiv 3 \bmod 4}} \legendre{\prod_{0<|h|\leq H} (-h)^{j_h}}{p} \\
    & \quad +O_{H, \mathbf{j}}\left(\frac{1}{\sqrt{Q}}\right).
\end{align*} 
Therefore, using Lemma \ref{Siegel} and the bound \eqref{LowerBoundSiegel} together with the fact that $H$ and $\mathbf{j}$ are fixed, we get
$$ 
\frac{1}{\pi^*(Q)}\sum_{Q\leq p\leq 2Q} \prod_{0<|h|\leq H}\textup{FT}_p (h)^{j_h}= \ex\left(\prod_{0<|h|\leq H}\left(\frac{\Y}{2\pi i h} \X_{-h} \right)^{j_h}\right) +O_{H, \mathbf{j}}\left(\exp\left(-\frac c2 \sqrt{\log Q}\right)\right),
$$
where $c$ is the constant from Lemma \ref{Siegel}.  This shows that the Fourier coefficients of the Legendre path converge to those of $F_{\mathbb{X}}$ in the sense of finite distributions, as desired.

\section{Tightness of the Sequence of processes $\mathcal{F}_Q$}\label{s: Tighness}

    To complete the proof of Theorem \ref{thm: main theorem} we need to show that the sequence of processes $(\mathcal{F}_{Q})_Q$ is tight. To this end we shall use the following variant of Kolmogorov's tightness criterion. 

\begin{pro}[Proposition B.11.11 of \cite{Ko}] Let $(\Y_n)_{n\geq 1}$ be a sequence of $C([0,1])$-valued random variables. Suppose that there exist positive real numbers $\alpha_1, \alpha_2, \alpha_3, \beta_1,\beta_2, \delta, C$, such that $\beta_2<\beta_1$ and
for any real numbers $0\leq s<t\leq 1$ and any $n\geq 1$, we have 
$$
\begin{cases}
\ex(|\Y_n(t)-\Y_n(s)|^{\alpha_1})\leq C|t-s|^{1+\delta} & \text{ if } 0< |t-s|\leq n^{-\beta_1}, \\
\ex(|\Y_n(t)-\Y_n(s)|^{\alpha_2})\leq C|t-s|^{1+\delta} & \text{ if } n^{-\beta_1}\leq |t-s|\leq n^{-\beta_2}, \\
\ex(|\Y_n(t)-\Y_n(s)|^{\alpha_3})\leq C|t-s|^{1+\delta} & \text{ if } n^{-\beta_2}\leq |t-s|\leq 1. 
\end{cases}
$$
Then $(\Y_n)_{n\geq 1}$ is tight.

\end{pro}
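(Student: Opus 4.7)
The plan is to adapt the classical dyadic chaining proof of Kolmogorov's tightness criterion, modified so that the scale of the dyadic partition selects which of the three moment hypotheses to apply. By the Arzel\`a--Ascoli characterization of relatively compact subsets of $C([0,1])$ combined with Prokhorov's theorem, tightness of $(\Y_n)_{n\geq 1}$ is equivalent to (a) tightness of the sequence of real-valued random variables $(\Y_n(0))_n$, together with (b) equicontinuity in probability: for every $\eta>0$,
$$ \lim_{\rho\to 0}\limsup_{n\to\infty}\pr\Bigl(\sup_{|t-s|\leq \rho}|\Y_n(t)-\Y_n(s)|\geq \eta\Bigr)=0.$$
Condition (a) is trivial in the intended applications where $\Y_n(0)$ is deterministic, so I focus on (b).

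For each $n$ and each $k\geq 1$, let $M_{n,k}:=\max_{1\leq j\leq 2^k}|\Y_n(j2^{-k})-\Y_n((j-1)2^{-k})|$ be the maximum increment of $\Y_n$ on the $k$-th dyadic partition of $[0,1]$. A standard telescoping argument shows that for any $K\geq 1$ and any $s,t\in[0,1]$ with $|t-s|\leq 2^{-K}$ one has $|\Y_n(t)-\Y_n(s)|\leq 2\sum_{k\geq K} M_{n,k}$, reducing matters to uniform tail control of the series $\sum_{k\geq K}M_{n,k}$. For each pair $(n,k)$, exactly one of the three hypotheses applies to the increments at scale $2^{-k}$, depending on whether $2^{-k}$ lies in $[0,n^{-\beta_1}]$, $[n^{-\beta_1},n^{-\beta_2}]$, or $[n^{-\beta_2},1]$; writing $\alpha(n,k)\in\{\alpha_1,\alpha_2,\alpha_3\}$ for the corresponding exponent, the union bound gives
$$ \ex\bigl(M_{n,k}^{\alpha(n,k)}\bigr)\leq 2^k\cdot C\cdot 2^{-k(1+\delta)}=C\cdot 2^{-k\delta}.$$
The crucial point is that the resulting geometric decay $2^{-k\delta}$ is the same in all three regimes, since the constant $\delta$ is common.

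Set $M:=\max(\alpha_1,\alpha_2,\alpha_3)$, fix any $\gamma\in(0,\delta/M)$, and put $a_k:=2^{-k\gamma}$. By Markov's inequality applied with the exponent $\alpha(n,k)$,
$$ \pr(M_{n,k}\geq a_k)\leq a_k^{-\alpha(n,k)}\ex\bigl(M_{n,k}^{\alpha(n,k)}\bigr)\leq C\cdot 2^{-k(\delta-\gamma M)},$$
which is summable in $k$ with a bound independent of $n$. Given $\eta>0$, pick $K$ so large that $2\sum_{k\geq K} a_k<\eta$; then on the event $\{\sup_{|t-s|\leq 2^{-K}}|\Y_n(t)-\Y_n(s)|\geq \eta\}$ one must have $M_{n,k}\geq a_k$ for at least one $k\geq K$, whence
$$ \pr\Bigl(\sup_{|t-s|\leq 2^{-K}}|\Y_n(t)-\Y_n(s)|\geq \eta\Bigr)\leq \sum_{k\geq K}\pr(M_{n,k}\geq a_k)\leq C\sum_{k\geq K}2^{-k(\delta-\gamma M)},$$
and the right-hand side tends to $0$ as $K\to\infty$, uniformly in $n$. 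The main subtlety is that the exponent $\alpha(n,k)$ genuinely varies with \emph{both} $n$ and $k$: one uses $\alpha_3$ at coarse scales $2^{-k}\geq n^{-\beta_2}$, $\alpha_2$ at intermediate scales, and $\alpha_1$ at the finest scales $2^{-k}\leq n^{-\beta_1}$. The three-range structure of the hypothesis is designed precisely so that the decay $2^{-k\delta}$ is uniform across regimes, enabling the chaining estimate with constants independent of $n$.
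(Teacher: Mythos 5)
Your argument is correct, and it is essentially the standard dyadic-chaining proof of the Kolmogorov tightness criterion; the paper itself gives no proof here, as it simply quotes Proposition B.11.11 from Kowalski's book, whose proof is the same chaining argument you give. You correctly isolate the one point that makes the three-range variant work: at each dyadic scale $2^{-k}$ exactly one (or, at the boundary, either) of the hypotheses applies to \emph{all} level-$k$ increments, and since the exponent $\delta$ in $C|t-s|^{1+\delta}$ is common to the three regimes, the union bound yields $\ex(M_{n,k}^{\alpha(n,k)})\le C2^{-k\delta}$ uniformly, after which Markov with the threshold $a_k=2^{-k\gamma}$, $0<\gamma<\delta/\max_i\alpha_i$, gives summable tail bounds uniform in $n$. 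One remark: as literally quoted, the proposition needs the additional (implicit) hypothesis that $(\Y_n(0))_n$ is tight --- otherwise $\Y_n\equiv n$ is a counterexample --- and you rightly flag this; in the paper's application it is automatic since $f_p(0)=0$ for every $p$.
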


Let $0\leq s<t\leq 1$ be real numbers. To use this criterion for the sequence $(\mathcal{F}_{Q})_Q$, we shall bound different moments of $|f_p(t)-f_p(s)|$ (as $p$ varies in $[Q, 2Q]$) in the three ranges $0< |t-s|\leq Q^{-1}$,  $Q^{-1}\leq |t-s|\leq Q^{-2/5}$, and $ Q^{-2/5}\leq |t-s|\leq 1.$ 

\subsection{The ``trivial'' range: $0< |t-s|\leq 1/Q$}
Recall the definition of $f_p$: the concatenation of points in the time graph of $\frac{1}{\sqrt{p}} \sum_{n \leq pt} \chi_p(n)$. This implies the trivial bound
$$
|f_p(t) - f_p(s)| \leq \frac{1}{\sqrt{p}}|pt - ps| = \sqrt{p}|t-s|.
$$
Therefore, if $0<|t-s|\leq 1/Q$ then $|f_p(t) - f_p(s)|\leq |t-s|^{1/2}$ and hence
\begin{equation}\label{FirstRangeTS}  
\frac{1}{\pi^*(Q)}\sum_{Q\leq p\leq 2Q} 
|f_p(t)-f_p(s)|^4 \leq |t-s|^2.
\end{equation}

\begin{rem}\label{Rem.Trivial}
By taking a very large moment of $|f_p(t)-f_p(s)|$ we can cover the whole range $0 \leq |t-s|\leq Q^{-1/2-\ep}$.
\end{rem}


\subsection{The ``Burgess'' range: $Q^{-1} \leq |t-s|\leq Q^{-2/5}$} In this range we shall bound a very large moment of $|f_p(t)-f_p(s)|$ using the following bound of Burgess for short character sums  (see \cite[Theorem 1]{Bu} with $r=2$) 
\begin{equation}\label{Eq.Burgess}
    \sum_{M\leq n\leq M+N} \chi_p(n) \ll N^{1/2} p^{3/16}\log p,
\end{equation} 
which is valid uniformly in $N$. 
\begin{lem}\label{SecondRangeTSLemma}
Let $0\leq s<t\leq 1$ be real numbers such that $Q^{-1}\leq |t-s|\leq Q^{-2/5}$. Then we have
$$
 \frac{1}{\pi^*(Q)}\sum_{Q\leq p\leq 2Q} |f_p(t)-f_p(s)|^{1000} \ll |t-s|^2.
$$

\end{lem}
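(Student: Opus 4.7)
The plan is to derive a pointwise (in $p$) bound on $|f_p(t)-f_p(s)|$ directly from Burgess's inequality; no cancellation over $p$ is required in this short range of $|t-s|$.

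First, using \eqref{Eq: RelationFpSp}, I write
\[
f_p(t)-f_p(s) \;=\; \frac{1}{\sqrt{p}}\bigl(S_p(tp)-S_p(sp)\bigr) \;+\; O\!\left(\frac{1}{\sqrt{p}}\right).
\]
The difference $S_p(tp)-S_p(sp) = \sum_{\lfloor sp\rfloor < n\leq \lfloor tp\rfloor}\chi_p(n)$ is a character sum over an interval whose length is at most $p|t-s|+1$. Since $|t-s|\geq 1/Q$ and $p\in[Q,2Q]$, this length is comparable to $p|t-s|$ (and $p|t-s|\geq 1/2$), so the $+1$ is harmless.

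Next, applying the Burgess bound \eqref{Eq.Burgess} to this sum gives
\[
|S_p(tp)-S_p(sp)| \;\ll\; (p|t-s|)^{1/2}\, p^{3/16}\log p,
\]
so that, after dividing by $\sqrt{p}$ (and noting that $|t-s|^{1/2}Q^{3/16}\log Q \gg 1/\sqrt{p}$ in this range, so the error term is absorbed),
\[
|f_p(t)-f_p(s)| \;\ll\; |t-s|^{1/2}\, Q^{3/16}\log Q,
\]
uniformly for $p\in[Q,2Q]$. Raising to the $1000$th power,
\[
|f_p(t)-f_p(s)|^{1000} \;\ll\; |t-s|^{500}\, Q^{375/2}(\log Q)^{1000}.
\]

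Finally, I split $|t-s|^{500} = |t-s|^2 \cdot |t-s|^{498}$ and use the hypothesis $|t-s|\leq Q^{-2/5}$ to bound $|t-s|^{498}\leq Q^{-996/5}$. Since $\tfrac{375}{2}-\tfrac{996}{5} = -\tfrac{117}{10} < 0$, this yields
\[
|f_p(t)-f_p(s)|^{1000} \;\ll\; |t-s|^2 \cdot Q^{-117/10}(\log Q)^{1000} \;\ll\; |t-s|^2
\]
for $Q$ sufficiently large. Because the bound is uniform in $p$, averaging over $p\in[Q,2Q]$ is trivial and yields the lemma with significant room to spare.

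There is no real obstacle here beyond the arithmetic: the exponent $1000$ in the statement is amply large enough to absorb the $Q^{3/16}$ loss from Burgess in the range $|t-s|\leq Q^{-2/5}$; in fact any exponent exceeding a modest threshold would suffice. By contrast, the third (longest) range $Q^{-2/5}\leq |t-s|\leq 1$ will genuinely require averaging over $p$ together with the quadratic large-sieve inequalities, since the pointwise Burgess strategy breaks down once $|t-s|$ is no longer so small.
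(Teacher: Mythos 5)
Your proof is correct and is essentially identical to the paper's: both apply Burgess's bound \eqref{Eq.Burgess} to the interval of length $\asymp p|t-s|$ to get the pointwise estimate $|f_p(t)-f_p(s)|\ll |t-s|^{1/2}Q^{3/16}\log Q$, then raise to the $1000$th power and use $|t-s|\le Q^{-2/5}$ to absorb the $Q^{375/2}(\log Q)^{1000}$ factor. Your explicit check that the $O(1/\sqrt{p})$ concatenation error is dominated, and your closing remark about why this range is easy while the range $|t-s|\geq Q^{-2/5}$ needs the large sieve, match the paper's Remark \ref{Rem.Burgess} and surrounding discussion.
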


\begin{proof}
It follows from \eqref{Eq.Burgess} that
$$ |f_p(t) - f_p(s)| \leq \frac{1}{\sqrt{p}}\left|\sum_{ps\leq n\leq pt} \chi_p(n)\right| +O\left(\frac{1}{\sqrt{p}}\right) \ll p^{3/16} (\log p) |t-s|^{1/2},$$
since $ |t-s|\geq 1/Q$. 
Therefore, using that $|t-s|\leq Q^{-2/5}$ we obtain
$$ \frac{1}{\pi^*(Q)}\sum_{Q\leq p\leq 2Q} |f_p(t)-f_p(s)|^{1000} \ll Q^{375/2} (\log Q)^{1000} |t-s|^{500}  \ll |t-s|^2,$$
as desired. 
\end{proof}
\begin{rem}\label{Rem.Burgess}
By taking a very large moment of $|f_p(t)-f_p(s)|$ we can cover the whole range $Q^{-1} \leq |t-s|\leq Q^{-3/8-\ep}$ using the Burgess bound \eqref{Eq.Burgess}.
\end{rem}

 \subsection{The range $Q^{-2/5}\leq |t-s|\leq 1$} This final range is the hardest to deal with. Indeed, a new feature occurs here (and more generally in the range $Q^{-1/2+\varepsilon} \leq |t-s|\leq 1$), caused by the fact that the random variables $\X_n$ are not independent, and $F_{\X,p}$ are not sub-Gaussian. In this case we shall use the quadratic large sieve to prove the following key proposition. 

\begin{pro}\label{ThirdRangeTSProposition}
Let $\beta=1/1000$ and $0\leq s<t\leq 1$ be real numbers such that $Q^{-2/5}\leq |t-s|\leq 1$. Then we have
$$
 \frac{1}{\pi^*(Q)}\sum_{Q\leq p\leq 2Q} |f_p(t)-f_p(s)|^{4} \ll |t-s|^{1+\beta}.
$$

\end{pro}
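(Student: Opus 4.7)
The plan is to bound the fourth moment by combining the Pólya Fourier expansion \eqref{eq: PolyaFourier}, Heath--Brown's quadratic large sieve \cite{HB}, Burgess's short character-sum inequality \eqref{Eq.Burgess}, and the moment estimate on $\|f_p\|_\infty$ afforded by \cite{La}. First I truncate the Pólya expansion at a parameter $Z$ (ultimately $Z = Q$); the resulting error contributes a negligible $O(Q^{-2})$ to the fourth moment. After separating $p \equiv 1 \pmod 4$ from $p \equiv 3 \pmod 4$ via \eqref{eq: Polya1mod4} and \eqref{eq: Polya3mod4}, it suffices to bound $\frac{1}{\pi^*(Q)}\sum_p |S_p|^4$, where $S_p = \sum_{1 \le n \le Z}\chi_p(n) b_n$ with $|b_n| \ll \min(1/n, \tau)$ and $\tau := |t-s|$; concretely, in the $p \equiv 1 \pmod 4$ case, $b_n = (\sin 2\pi nt - \sin 2\pi ns)/n$.

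The core device is to write $|S_p|^4 = |S_p^2|^2$ and expand $S_p^2 = \sum_{m \le Z^2} c_m \chi_p(m)$ with $c_m = \sum_{n_1 n_2 = m,\, n_i \le Z} b_{n_1} b_{n_2}$. Applying Heath--Brown's quadratic large sieve then gives
\[
\sum_{Q \le p \le 2Q} |S_p|^4 \ll_\epsilon (Q + Z^2)(QZ)^\epsilon \sum_m |c_m|^2,
\]
and the submultiplicative divisor bound $d(n_1 n_2) \le d(n_1)d(n_2)$ yields $\sum_m |c_m|^2 \le \bigl(\sum_n d(n) b_n^2\bigr)^2 \ll \tau^{2-\epsilon}$. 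Since the direct choice $Z = Q$ is too crude here, I split $S_p = S_p^{(1)} + S_p^{(2)}$ at a threshold $Z_1 \le Q^{1/2}$. For the low-frequency sum $S_p^{(1)}$, the constraint $Z_1^2 \le Q$ collapses the large-sieve prefactor to $O(Q)$, giving $\frac{1}{\pi^*(Q)}\sum_p |S_p^{(1)}|^4 \ll Q^\epsilon \tau^{2-\epsilon}$, which is $\ll \tau^{1+\beta}$ throughout $\tau \ge Q^{-2/5}$ once $\epsilon$ is chosen small relative to $\beta = 1/1000$. For the high-frequency tail $S_p^{(2)}$, I apply Abel summation to express it in terms of the partial character sums $T_p(N) = \sum_{n \le N}\chi_p(n)$, then bound $T_p(N)$ via Burgess's inequality \eqref{Eq.Burgess} when $N \le Q^{5/8-o(1)}$ and via the Pólya--Vinogradov bound for larger $N$; the resulting pointwise estimate combined with a second-moment application of the large sieve yields the needed control on $\frac{1}{\pi^*(Q)}\sum_p |S_p^{(2)}|^4$.

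The main technical obstacle is uniformity across the entire range $Q^{-2/5} \le \tau \le 1$, especially near the upper end. When $\tau$ is bounded below by an absolute constant $c_0$, the large-sieve bound $Q^\epsilon \tau^{2-\epsilon}$ fails to beat $\tau^{1+\beta}$ because of the $Q^\epsilon$ loss; in this regime I instead use the trivial estimate $|f_p(t) - f_p(s)|^4 \le 16 \|f_p\|_\infty^4$ combined with $\frac{1}{\pi^*(Q)}\sum_p \|f_p\|_\infty^4 = O(1)$, which is a consequence of the double-exponential tail estimate on $\max_x |S_p(x)|/\sqrt p$ from \cite{La} (compare Corollary~\ref{Cor: Distribution Maximum}). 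Since $\tau^{1+\beta}$ is then bounded below by a positive constant, this furnishes the desired bound with an absolute implicit constant. The specific exponent $\beta = 1/1000$ emerges from balancing the $\epsilon$-loss in the quadratic large sieve against the Burgess exponent $3/16$ and the boundary $\tau \ge Q^{-2/5}$.
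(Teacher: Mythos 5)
Your overall architecture (Pólya expansion with $Z=Q$, a low/high frequency split at $\asymp Q^{1/2}$, Heath--Brown's large sieve applied to the squared sum, and the divisor-sum computation giving $\sum_{mn=\square}|c_mc_n|\ll_\ep \tau^{2-\ep}$) matches the paper's strategy for the low-frequency part when $\tau$ is small. But there is a genuine gap at the top of the range, and your proposed patch does not close it. After dividing by $\pi^*(Q)$, Heath--Brown's inequality gives $\frac{1}{\pi^*(Q)}\sum_p|S_p^{(1)}|^4\ll_\ep Q^{2\ep}\tau^{2-\ep}$, and $Q^{2\ep}\tau^{2-\ep}\leq \tau^{1+\beta}$ forces $\tau\leq Q^{-2\ep/(1-\beta-\ep)}$. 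So for any \emph{fixed} $\ep>0$ (and $\ep$ must be fixed, since the implied constant in Heath--Brown depends on it) the large-sieve bound only covers $\tau\leq Q^{-c(\ep)}$ with $c(\ep)\asymp\ep>0$; your claim that it works ``throughout $\tau\geq Q^{-2/5}$'' is false. Your fallback $\frac{1}{\pi^*(Q)}\sum_p\|f_p\|_\infty^4=O(1)$ only yields $\ll\tau^{1+\beta}$ when $\tau\gg 1$, so the window $Q^{-c(\ep)}<\tau<c_0$ is covered by neither argument. The missing ingredient is a quadratic large sieve \emph{without} the $Q^\ep$ loss: the paper invokes Montgomery and Vaughan's inequality for prime discriminants (Lemma \ref{LargeSieveMV}), whose constant is $\asymp x/\log x\asymp\pi^*(Q)$, valid for coefficient length $N\leq x^{1/2-\ep}$; this is exactly what handles $Q^{-1/6}\leq|t-s|\leq 1$ and is explained in the remark preceding \eqref{BoundI13}.

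Your treatment of the high-frequency tail $S_p^{(2)}$ also does not work as written. The coefficients carry the oscillating factor $e(-nt)-e(-ns)$, so Abel summation against the untwisted partial sums $T_p(N)$ costs a total-variation factor $\asymp\sum_{n>Z_1}\left(|t-s|/n+1/n^2\right)\asymp|t-s|\log Q$, and multiplying this by Burgess or P\'olya--Vinogradov for $\max_N|T_p(N)|$ gives a pointwise bound of size $\gg Q^{1/2}$ when $|t-s|\asymp 1$, which is useless even as the sup factor in a (sup)$^2\times$(second moment) argument. What does work is simpler: the trivial bound $|S_p^{(2)}|\ll\sum_{n>Z_1}1/n\ll\log Q$ together with the Heath--Brown second moment $\sum_p|S_p^{(2)}|^2\ll_\ep Q^{1+\ep}/Z_1^{1-\ep}$ gives $\frac{1}{\pi^*(Q)}\sum_p|S_p^{(2)}|^4\ll Q^{-1/2+O(\ep)}\ll\tau^{1+\beta}$ for $\tau\geq Q^{-2/5}$. (The paper instead controls the tail via an exceptional set $\mathcal{A}(\delta,Y)$ and a bootstrapping Cauchy--Schwarz step, but the direct route just described is available; your version, relying on Burgess and Abel summation, is not.) Repair the intermediate range with Lemma \ref{LargeSieveMV} and replace the tail argument, and the proof goes through.
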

We shall require the following important large sieve inequalities for quadratic characters. The first is a direct consequence of a classical result due to Heath-Brown \cite{HB}.
\begin{lem}\label{HBLargeSieve}
Let $Q, Y\geq 2$. Then for arbitrary complex numbers $a_n$ 
and for any $\ep>0$ we have
$$
\sum_{Q\leq p\leq 2 Q} \left|\sum_{n\leq Y} a_n \chi_p(n)\right|^2 \ll_{\ep} (QY)^{\ep}(Q+Y) \sum_{\substack{m, n \leq Y\\ mn= \square}} |a_ma_n|.
$$
\end{lem}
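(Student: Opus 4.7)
The plan is to deduce Lemma \ref{HBLargeSieve} as an essentially immediate corollary of Heath-Brown's quadratic large sieve inequality \cite{HB}, which states that for $D, Y \geq 2$, arbitrary complex coefficients $(a_n)_{n \leq Y}$, and any $\ep > 0$,
\[
\sideset{}{^\flat}\sum_{|d| \leq D} \Bigl|\sum_{n \leq Y} a_n \chi_d(n)\Bigr|^2 \ll_{\ep} (DY)^{\ep}(D+Y) \sum_{\substack{m, n \leq Y \\ mn = \square}} |a_m a_n|,
\]
where the outer sum is restricted to fundamental discriminants $d$ (of either sign). The only extra content of our formulation is that the outer variable ranges over primes in $[Q, 2Q]$ rather than over fundamental discriminants, and this will be handled by a one-step embedding argument together with the fact that Legendre symbols coincide with Kronecker symbols attached to fundamental discriminants.

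The key bookkeeping step is to associate to each odd prime $p$ the fundamental discriminant $p^* := (-1)^{(p-1)/2} p$, so that $p^* = p$ when $p \equiv 1 \pmod 4$ and $p^* = -p$ when $p \equiv 3 \pmod 4$. A direct consequence of the definition of the Kronecker symbol together with quadratic reciprocity is the identity $\chi_{p^*}(n) = \bigl(\tfrac{n}{p}\bigr) = \chi_p(n)$ for every integer $n$ (both vanish when $p \mid n$ and agree otherwise). Hence the inner character sum in the statement is unaffected by replacing $\chi_p$ with $\chi_{p^*}$.

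As $p$ ranges over the primes in $[Q, 2Q]$, the values $p^*$ are pairwise distinct fundamental discriminants with $|p^*| \leq 2Q$, so the positivity of the summand gives
\[
\sum_{Q \leq p \leq 2Q} \Bigl|\sum_{n \leq Y} a_n \chi_p(n)\Bigr|^2 \leq \sideset{}{^\flat}\sum_{|d| \leq 2Q} \Bigl|\sum_{n \leq Y} a_n \chi_d(n)\Bigr|^2,
\]
and applying Heath-Brown's inequality with $D = 2Q$ produces the claimed bound after the resulting factor of $2$ is absorbed into the implicit constant. I expect no real obstacle here: the mathematical content lies entirely in Heath-Brown's theorem, and the argument is simply an injective embedding of the set of primes in $[Q,2Q]$ into the set of fundamental discriminants of absolute value at most $2Q$, followed by the standard identification of Legendre symbols with Kronecker symbols.
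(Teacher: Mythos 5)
Your proposal is correct and is essentially identical to the paper's (one-line) proof, which likewise deduces the lemma from Corollary 2 of Heath-Brown's paper by embedding the primes $Q\leq p\leq 2Q$ into the fundamental discriminants of absolute value at most $2Q$ and using positivity of the summands. Your write-up merely makes explicit the standard identification $\chi_{p^*}=\left(\frac{\cdot}{p}\right)$ with $p^*=(-1)^{(p-1)/2}p$, which the paper leaves implicit.
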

\begin{proof}
This follows from Corollary 2 of \cite{HB} by embedding the set of primes $Q\leq p\leq 2Q$ in the set of fundamental discriminants $d$ with $|d|\leq 2Q$.
\end{proof}


As a corollary we deduce the following lemma. 
\begin{lem}\label{BoundTailPolyaRC} Let $\ep>0$ be a fixed small real number and $(a_n)_{n\geq 1}$ be an arbitrary bounded sequence of complex numbers. Let $Q$ be large, $1\leq Y\leq Q$ and $\delta>0$ be real numbers. The number of primes $Q\leq p\leq 2Q$ such that 
\begin{equation}\label{LargeTailRC}
\Bigg|\sum_{Y\leq |n|\leq Q} \frac{a_n\chi_p(n)}{n}\Bigg| >\delta,
\end{equation}
is 
$$\ll_{\ep} \frac{Q^{1+\ep}}{\delta^2 Y}.$$

\end{lem}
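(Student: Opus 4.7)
The plan is to combine Chebyshev's inequality with Heath-Brown's quadratic large sieve (Lemma~\ref{HBLargeSieve}) applied dyadically. First I would reduce to a positive-index sum: using $\chi_p(-n)=\chi_p(-1)\chi_p(n)$ with $|\chi_p(-1)|=1$ and the triangle inequality, we have
$$\left|\sum_{Y\leq |n|\leq Q}\frac{a_n\chi_p(n)}{n}\right|\leq \left|\sum_{Y\leq k\leq Q}\frac{a_k\chi_p(k)}{k}\right|+\left|\sum_{Y\leq k\leq Q}\frac{a_{-k}\chi_p(k)}{k}\right|,$$
so if the left-hand side exceeds $\delta$ then at least one of the two positive-index sums exceeds $\delta/2$. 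It therefore suffices to bound, for an arbitrary bounded sequence $(b_k)$, the number of primes $Q\leq p\leq 2Q$ with $|S(p)|>\delta/2$, where $S(p):=\sum_{Y\leq k\leq Q}b_k\chi_p(k)/k$. By Chebyshev this quantity is at most $(4/\delta^2)\sum_{Q\leq p\leq 2Q}|S(p)|^2$, so the goal becomes an upper bound of size $Q^{1+\ep}/Y$ on this second moment.

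To obtain the $1/Y$ saving, I would perform a dyadic decomposition: set $N_j=Y\cdot 2^j$ for $0\leq j\leq J$ with $J\ll \log Q$, let $S_j(p):=\sum_{N_j\leq k\leq \min(2N_j,Q)}b_k\chi_p(k)/k$, and use Cauchy--Schwarz to get $|S(p)|^2\leq J\sum_j|S_j(p)|^2$. For each dyadic piece, Lemma~\ref{HBLargeSieve} applied to the coefficients $b_k/k$ supported on $[N_j,2N_j]$ gives
$$\sum_{Q\leq p\leq 2Q}|S_j(p)|^2\ll_{\ep}(QN_j)^{\ep}(Q+N_j)\sum_{\substack{N_j\leq m,n\leq 2N_j\\ mn=\square}}\frac{|b_mb_n|}{mn}.$$
Since $(b_n)$ is bounded and $mn\geq N_j^2$ on this range, the diagonal is $\ll N_j^{-2}\cdot\#\{(m,n)\in[N_j,2N_j]^2:mn=\square\}$. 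Writing $mn=k^2$ and bounding the number of factorisations by the divisor function, the cardinality is $\leq \sum_{N_j\leq k\leq 2N_j}d(k^2)\ll N_j^{1+\ep}$, hence the diagonal is $\ll N_j^{\ep-1}$. Using $N_j\leq Q$ we conclude that $\sum_p|S_j(p)|^2\ll_{\ep} Q^{1+2\ep}/N_j$.

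Finally, summing over the $J\ll\log Q$ dyadic scales and using the geometric series $\sum_j 1/N_j\ll 1/Y$ absorbs all logarithmic losses into $Q^{\ep}$, producing $\sum_p|S(p)|^2\ll_{\ep} Q^{1+3\ep}/Y$. Chebyshev's inequality then delivers the claimed bound after relabeling $\ep$. The only mildly delicate point is the dyadic decomposition: a direct application of Heath-Brown to the entire range $[Y,Q]$ would produce a diagonal of the wrong shape and lose a factor comparable to $Y$, so splitting into dyadic blocks (where $mn\geq N_j^2$ is sharp) is what yields the correct $1/Y$ dependence. Everything else is standard.
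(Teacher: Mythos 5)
Your argument is correct, but it takes a more elaborate route than the paper's, and the justification you give for the extra work is mistaken. The paper applies Lemma \ref{HBLargeSieve} \emph{once}, directly to the full range $Y\leq |n|\leq Q$ with coefficients $a_n/n$: the prefactor is then $\ll_{\ep} Q^{1+\ep}$, and the diagonal sum $\sum_{|n_1n_2|=\square,\ |n_1|,|n_2|>Y} |a_{n_1}a_{n_2}|/|n_1n_2|$ is handled by writing $|n_1n_2|=n^2$, noting that $n>Y$ forces the sum to be $\ll \sum_{n>Y} d(n^2)/n^2 \ll_{\ep} Y^{\ep-1}$. The point you missed is that the weight $1/(mn)=1/n^2$ together with the constraint $m,n>Y$ already produces the $1/Y$ saving; there is no loss of a factor of $Y$, so your closing claim that ``a direct application of Heath-Brown to the entire range $[Y,Q]$ would produce a diagonal of the wrong shape'' is false, and the dyadic decomposition plus Cauchy--Schwarz over $J\ll\log Q$ blocks is unnecessary (though harmless, since you absorb the resulting logarithms into $Q^{\ep}$). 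Your preliminary splitting into positive and negative indices is a clean way to reduce to the literal statement of Lemma \ref{HBLargeSieve}; the paper instead works with the condition $|n_1n_2|=\square$ directly. Both routes yield the stated bound, but you should delete or correct the final remark, as it asserts a necessity that does not exist.
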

\begin{proof}

By Lemma \ref{HBLargeSieve} we have 
\begin{align*}
\sum_{Q\leq p\leq 2Q}\left|\sum_{Y\leq |n|\leq Q} \frac{a_n\chi_p(n) }{n}\right|^2 &\ll_{\ep} Q^{1+\ep/2} \sum_{\substack{Y\leq |n_1|, |n_2|\leq Q\\ |n_1n_2|=\square}} \left|\frac{a_{n_1}a_{n_2}}{n_1n_2}\right|
 \ll_{\ep} Q^{1+\ep/2} \sum_{n>Y} \frac{d(n^2)}{n^2},
\end{align*}
by writing $n^2=|n_1n_2|$ and using that $(a_n)_{n\geq 1}$ is bounded.  Finally, using the trivial bound $d(n^2)\ll_{\ep} n^{\ep/2}$ we deduce that the number of primes $Q\leq p\leq 2Q$ such that \eqref{LargeTailRC} holds is 
$$ \ll_{\ep} \frac{1}{\delta^2} Q^{1+\ep/2} \sum_{n>Y} \frac{d(n^2)}{n^2}
\ll_{\ep} \frac{1}{\delta^2} Q^{1+\ep/2} \sum_{n>Y} \frac{1}{n^{2-\ep/2}} \ll_{\ep} \frac{Q^{1+\ep/2}}{\delta^2 Y^{1-\ep/2}}\ll_{\ep} \frac{Q^{1+\ep}}{\delta^2 Y},
 $$
 as desired.

\end{proof} 

The next lemma is a large sieve inequality for prime discriminants, which is a special case of Lemma 9 of Montgomery and Vaughan \cite{MoVa79} (see also Lemma 1 of \cite{BaMo}).

\begin{lem} [Lemma 9 of \cite{MoVa79}]\label{LargeSieveMV}
Let $\ep>0$ be a fixed small number. Let $x, N$ be real numbers such that $x\geq 2$ and  $2\leq N\leq x^{1/2-\ep}$. Then for arbitrary complex numbers $a_1, \dots, a_N$ we have 
$$
\sum_{p\le x} \left|\sum_{n\leq N} a_n \chi_p(n)\right|^2 \ll \frac{x}{\log x} \sum_{\substack{m, n \leq N\\ mn= \square}} |a_ma_n|. 
$$
\end{lem}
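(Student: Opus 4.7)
My plan is the classical route: expand the left-hand side, isolate the diagonal $mn=\square$ terms (which produce the stated main term), and bound the off-diagonal contribution via estimates for character sums over primes. Opening the square yields
$$\sum_{p\leq x}\Bigl|\sum_{n\leq N} a_n \chi_p(n)\Bigr|^2 = \sum_{m,n\leq N} a_m\overline{a_n} \sum_{p\leq x}\chi_p(mn).$$
For pairs with $mn=\square$, the inner sum equals $\pi(x)-O(\omega(mn))$, and summing yields a contribution of order $(x/\log x)\sum_{mn=\square}|a_m a_n|$, exactly the desired main term (the secondary $\omega(mn)\ll \log N$ error is easily absorbed).

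For off-diagonal pairs I would write $mn=dk^2$ with $d>1$ squarefree, so that $\chi_p(mn)=\chi_p(d)$ whenever $p\nmid d$. Quadratic reciprocity then identifies $p\mapsto \chi_p(d)$, for $p\nmid 2d$, with a fixed \emph{non-principal} Dirichlet character $\chi^{\ast}_d$ of conductor at most $4d$; any sign adjustments depending on $p\bmod 4$ (arising when $d\equiv 3\pmod 4$ or $d$ is even) can be absorbed by splitting the prime sum into residue classes modulo $8$. Under the hypothesis $N\leq x^{1/2-\ep}$, all such moduli satisfy $4d\leq 4N^2\leq 4x^{1-2\ep}$, strictly below $x$.

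The main obstacle is controlling the off-diagonal contribution
$$E := \sum_{\substack{d>1\\ d\textup{ squarefree}}} b_d \sum_{p\leq x}\chi^{\ast}_d(p), \qquad b_d := \sum_{\substack{m,n\leq N\\ mn/d\,=\,\square}} a_m\overline{a_n}.$$
Pointwise, Siegel--Walfisz only handles $d\leq (\log x)^A$, far below the permitted range $d\leq x^{1-2\ep}$, so the resolution must be an \emph{averaged} argument over $d$. Following the approach of Montgomery--Vaughan \cite{MoVa79}, I would apply Cauchy--Schwarz to turn $E$ into a bilinear form with weights to be chosen, and then invoke the large sieve for primitive Dirichlet characters evaluated at primes (using Vaughan's identity to express the prime sum as bilinear sums in $\chi^{\ast}_d$). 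The delicate point, and the step I expect to be the main technical hurdle, is tuning the Cauchy--Schwarz weights so that the final bound features the restricted norm $\sum_{mn=\square}|a_m a_n|$ rather than the much larger $\sum_{m,n\leq N}|a_m a_n|$, exploiting that the correspondence $(m,n)\mapsto d=\textup{squarefree part of }mn$ organises the pairs into ``conjugate'' diagonal classes. The condition $N\leq x^{1/2-\ep}$ is precisely what ensures that the large-sieve main term scales linearly in $x$, producing the crucial $1/\log x$ saving that distinguishes this prime-moduli version from the unrestricted Heath--Brown quadratic large sieve of Lemma \ref{HBLargeSieve}.
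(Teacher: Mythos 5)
You should first note that the paper does not prove this lemma at all: it is imported verbatim as Lemma 9 of \cite{MoVa79} (see also Lemma 1 of Baker--Montgomery), so you are attempting strictly more than the authors do. Your skeleton is the correct classical one and matches Montgomery--Vaughan's own strategy: expand the square, extract the $mn=\square$ diagonal (your computation $\sum_{p\le x}\chi_p(mn)=\pi(x)+O(\omega(mn))$ there is fine), and reduce the off-diagonal to sums $\sum_{p\le x}\chi_d^{*}(p)$ for non-principal real characters of conductor $O(N^2)=O(x^{1-2\ep})$ via quadratic reciprocity. One simplification you are missing: there is no need to ``tune weights'' to recover the restricted norm, since $\sum_{mn=\square}|a_ma_n|\ge\sum_n|a_n|^2$ and $\sum_{d}|b_d|\le\bigl(\sum_n|a_n|\bigr)^2\le N\sum_n|a_n|^2$; hence it suffices to show that $\sum_{p\le x}\chi_d^{*}(p)$ saves a factor $N\le x^{1/2-\ep}$ over the trivial bound \emph{on average} over $d\le N^2$.

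That averaged saving, however, is the entire content of the lemma, and your proposal only gestures at it (``apply Cauchy--Schwarz\dots invoke the large sieve\dots using Vaughan's identity''). Nothing you wrote would fail, but nothing is proved either: Siegel--Walfisz is useless in this range, as you say, and the bilinear route requires carrying out a Vinogradov/Vaughan-type decomposition of $\sum_{p\le x}\chi_d^{*}(p)$ into type I and type II sums, applying the multiplicative large sieve to each, and verifying that the hypothesis $N\le x^{1/2-\ep}$ (so conductors are at most $x^{1-2\ep}$) yields a saving of $x^{1/2-\ep}$ on average rather than merely $x^{o(1)}$ --- this last point is exactly what separates the present lemma from the Heath-Brown bound of Lemma \ref{HBLargeSieve}, whose $x^{\ep}$ loss the paper explicitly cannot afford in Case 2 of Proposition \ref{ThirdRangeTSProposition}. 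As it stands the hard analytic step is deferred rather than executed, so the argument is a plan for a proof, not a proof; the honest fix is either to carry out the bilinear estimate in full or to do what the paper does and cite \cite{MoVa79} directly.
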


\begin{proof}[Proof of Proposition \ref{ThirdRangeTSProposition}]

 Taking $Z=Q$ in P\'olya's Fourier expansion \eqref{eq: definition Legendre path} gives for all primes $Q\leq p\leq 2Q$ and all $t\in [0, 1]$
\begin{equation}\label{Eq.PolyaFinal}
  f_p(t)= \frac{\tau(\chi_p)}{2\pi i\sqrt{p}}\sum_{1\leq |n|\leq Q} \frac{\chi_p(n) (1-e(-nt))}{n} +O\left(\frac{\log Q}{\sqrt{Q}}\right).  
\end{equation}
Let $0\leq s<t\leq 1$ be real numbers such that $Q^{-2/5}\leq |t-s|\leq 1$. Using \eqref{Eq.PolyaFinal} together with the easy inequality $|a+b|^{4} \leq 2^{4}(|a|^{4}+|b|^{4})$, valid for all $a, b\in \mathbb{R}$, we get
\begin{align}\label{InequalityI}
\sum_{Q\leq p\leq 2Q} |f_p(t)-f_p(s)|^4&= \sum_{Q\leq p\leq 2Q} \left|\frac{\tau(\chi_p)}{2\pi i\sqrt{p}}\sum_{1\leq |n|\leq Q} \frac{\chi_p(n) (e(-ns)-e(-nt))}{n} +O\left(\frac{\log Q}{\sqrt{Q}}\right)\right|^4 \nonumber\\
&\ll \sum_{Q\leq p\leq 2Q} \left|\sum_{1\leq |n|\leq Q} \frac{\chi_p(n) (e(-ns)-e(-nt))}{n} \right|^4+ O\left(\frac{(\log Q)^3}{Q}\right).
\end{align}
Therefore, our goal is to show that 
$$ \mathcal{I}:= \sum_{Q\leq p\leq 2Q} \left|\sum_{1\leq |n|\leq Q} \frac{\chi_p(n) (e(-ns)-e(-nt))}{n} \right|^4\ll \pi^*(Q) |t-s|^{1+\beta}.$$ Hence, throughout the proof we will work under the assumption that
\begin{equation}\label{Eq.AssumptionI}
    \mathcal{I}\geq \pi^*(Q) |t-s|^{1+\beta},
\end{equation}  otherwise there is nothing to prove. 
Let $\ep>0$ be a small fixed real number. Let  $0<\delta<1$ and $1\leq Y\leq Q$ be parameters to be chosen, and define $\mathcal{A}(\delta, Y)$ to be the set of primes $p\in [Q, 2Q]$ such that 
$$ \left|\sum_{Y\leq |n|\leq Q} \frac{\chi_p(n) (e(-ns)-e(-nt))}{n}\right| >\delta.$$
Then it follows from Lemma \ref{BoundTailPolyaRC} (with the choice $a_n=e(-ns)-e(-nt)$)
that 
\begin{equation}\label{ConsequenceLemma4.5}
    |\mathcal{A}(\delta, Y)|\ll_{\ep} \frac{Q^{1+\ep/2}}{\delta^2 Y}.
\end{equation}
We now write 
$$ \mathcal{I}:= \mathcal{I}_1(\delta, Y)+\mathcal{I}_2 (\delta, Y),
$$
where $\mathcal{I}_1(\delta, Y)$ denotes the corresponding sum over primes $p\in [Q, 2Q]\setminus\mathcal{A}(\delta, Y)$ and $\mathcal{I}_2(\delta, Y)$ denotes the sum over those in $\mathcal{A}(\delta, Y)$. We start by bounding the latter term. By the Cauchy-Schwarz inequality and the bound \eqref{ConsequenceLemma4.5} we derive 
\begin{equation}\label{BoundI2}
\begin{aligned}
\mathcal{I}_2(\delta, Y) & \leq |\mathcal{A}(\delta, Y)|^{1/2}\left(\sum_{p \in \mathcal{A}(\delta, Y)} \left|\sum_{1\leq |n|\leq Q} \frac{\chi_p(n) (e(-ns)-e(-nt))}{n} \right|^8\right)^{1/2}\\
& \ll_{\ep} \frac{Q^{1/2+\ep/4}}{\delta Y^{1/2}} \left((\log Q)^4\sum_{p \in \mathcal{A}(\delta,Y)} \left|\sum_{1\leq |n|\leq Q} \frac{\chi_p(n) (e(-ns)-e(-nt))}{n} \right|^4\right)^{1/2}\\
& \ll_{\ep} \frac{Q^{1/2+\ep/2}}{\delta Y^{1/2}} \sqrt{\mathcal{I}}\ll_{\ep} \frac{Q^{\ep}}{\delta Y^{1/2} |t-s|^{(1+\beta)/2}} \mathcal{I}.
\end{aligned}
\end{equation}
by \eqref{Eq.AssumptionI} and since the inner sum over $n$ is trivially $\ll \log Q$. 

We now handle $\mathcal{I}_1(\delta, Y)$. Using our assumption on $\mathcal{A}(\delta,Y)$ together with the inequality $|a+b|^{4} \leq 2^{4}(|a|^{4}+|b|^{4})$ we obtain 
\begin{align*}
\mathcal{I}_1(\delta, Y)&= \sum_{p\in [Q, 2Q]\setminus\mathcal{A}(\delta,Y)} \left|\sum_{1\leq |n|\leq Y} \frac{\chi_p(n) (e(-ns)-e(-nt))}{n} +O(\delta) \right|^4\\
& \ll \sum_{Q\leq p\leq 2Q} \left|\sum_{1\leq |n|\leq Y} \frac{\chi_p(n) (e(-ns)-e(-nt))}{n} \right|^4 + \pi^{*}(Q)\delta^4.
\end{align*}
We choose $\delta:= |t-s|^{(1+\beta)/4}$. With this choice, the previous  estimate and \eqref{BoundI2} become
\begin{equation}\label{Eq.SecondBoundI1}
\mathcal{I}_1(\delta, Y) \ll \sum_{Q\leq p\leq 2Q} \left|\sum_{1\leq |n|\leq Y} \frac{\chi_p(n) (e(-ns)-e(-nt))}{n} \right|^4 + \pi^{*}(Q)|t-s|^{1+\beta},
\end{equation}
and 
\begin{equation}\label{Eq.SecondBoundI2}
\mathcal{I}_2(\delta, Y)  \ll_{\ep} \frac{Q^{\ep}}{Y^{1/2} |t-s|^{3(1+\beta)/4}} \mathcal{I}.
\end{equation}

To complete the proof we will choose $Y$ in such a way to insure that $\mathcal{I}_2(\delta, Y)\leq \mathcal{I}/2$. This will imply $\mathcal{I}\leq 2\mathcal{I}_1(\delta, Y)$, and hence it will only remain to bound the fourth moment on the right hand side of \eqref{Eq.SecondBoundI1}, which we shall handle using the quadratic large sieve. The choice of the parameter $Y$ as well as which large sieve inequality to use (Heath-Brown's large sieve (Lemma \ref{HBLargeSieve}), or the Montgomery-Vaughan large sieve (Lemma \ref{LargeSieveMV})) will depend on the size of $|t-s|$. We will consider two cases:
\subsection*{Case 1: $Q^{-2/5}\leq |t-s|\leq Q^{-1/6}$}
In this range we use Heath-Brown's large sieve. Indeed,  
 by Lemma \ref{HBLargeSieve} and Eq. \eqref{BoundsGSum} we obtain
\begin{align}\label{BoundI12}
\sum_{Q\leq p\leq 2Q} \left|\sum_{1\leq |n|\leq Y} \frac{\chi_p(n) (e(-ns)-e(-nt))}{n} \right|^4
&= \sum_{Q\leq p\leq 2Q} \left|\sum_{1\leq |m|\leq Y^2} \frac{\chi_p(m)g(m)}{m} \right|^2 \nonumber \\
&\ll_{\ep} Q^{\ep/2} (Q+Y^2) \sum_{\substack{1\leq |n_1|, |n_2|\leq Y^2\\ n_1n_2=\square}} \left|\frac{g(n_1)g(n_2)}{n_1n_2}\right| \\
&\ll_{\ep} Q^{\ep/2} (Q+Y^2)  |t-s|^{2-2\ep} \nonumber,
\end{align}
where $g$ is defined by \eqref{Definitiong}.
We now choose $Y=Q^{1/2-\ep}|t-s|^{(\beta-1)/2}$. Then by \eqref{Eq.SecondBoundI2} we obtain
$$ \mathcal{I}_2(\delta, Y)  \ll_{\ep} \frac{Q^{-1/4+2\ep}} {|t-s|^{1/2+\beta}} \mathcal{I} \leq \frac{\mathcal{I}}{2}, $$
if $\ep$ is suitably small and $Q$ is large enough, by our assumption on $|t-s|$.
Therefore, combining the estimates  \eqref{Eq.SecondBoundI1} and \eqref{BoundI12}  we deduce that 
\begin{equation}\label{BoundI0}
\mathcal{I}\leq 2 \mathcal{I}_1(\delta, Y)\ll_{\ep} Q^{\ep/2} (Q+Y^2)  |t-s|^{2-2\ep} + \pi^*(Q)|t-s|^{1+\beta} \ll\pi^*(Q)|t-s|^{1+\beta},
\end{equation}
as desired. 
\subsection*{Case 2: $Q^{-1/6}\leq|t-s|\leq 1$}
In this range we can no longer use Lemma \ref{HBLargeSieve} (Heath-Brown's quadratic large sieve) to bound the left hand side of \eqref{BoundI12} since the extra factor $Q^{\ep/2}$ is problematic if $|t-s|$ is large (for example if $|t-s|\asymp 1$). In this case we choose $Y= Q^{1/2-\ep}$ and use the large sieve inequality of Montgomery and Vaughan (Lemma \ref{LargeSieveMV}) instead. Indeed, it follows from Lemma \ref{LargeSieveMV} and Eq. \eqref{BoundsGSum} that 
\begin{equation}\label{BoundI13}
\begin{aligned}
\sum_{Q\leq p\leq 2Q} \left|\sum_{1\leq |n|\leq Y} \frac{\chi_p(n) (e(-ns)-e(-nt))}{n} \right|^4
&= \sum_{Q\leq p\leq 2Q} \left|\sum_{1\leq |m|\leq Y^2} \frac{\chi_p(m)g(m)}{m} \right|^2 \\
&\ll \pi^*(Q) \sum_{\substack{1\leq |n_1|, |n_2|\leq Y^2\\ n_1n_2=\square}} \left|\frac{g(n_1)g(n_2)}{n_1n_2}\right| \\
&\ll_{\ep}  \pi^*(Q)|t-s|^{2-2\ep}.
\end{aligned}
\end{equation}
Choosing $\ep$ to be suitably small and inserting this estimate in \eqref{Eq.SecondBoundI1} gives 
$$ \mathcal{I}_1(\delta, Y) \ll  \pi^{*}(Q)|t-s|^{1+\beta}. $$
Finally, using \eqref{Eq.SecondBoundI2} and our assumption on $|t-s|$ we derive
$$ \mathcal{I}_2(\delta, Y)  \ll_{\ep} \frac{Q^{-1/4+2\ep}} {|t-s|^{3(1+\beta)/4}} \mathcal{I} \leq \frac{\mathcal{I}}{2}, $$
if $\ep$ is suitably small and $Q$ is large enough. Combining these estimates we deduce that 
$$\mathcal{I}\leq 2 \mathcal{I}_1(\delta, Y) \ll  \pi^{*}(Q)|t-s|^{1+\beta}, $$
which completes the proof.
\begin{rem}\label{Rem.FinalRange}
By optimizing this method and allowing the exponent $\beta>0$ to be very close to $0$, we can cover the wider range $Q^{-1/2+\ep} \leq |t-s|\leq 1$.
\end{rem}

\end{proof}

\end{document}